\newcommand{\ve}{^{\varepsilon}}
\newcommand{\ds}{\displaystyle}
\newcommand{\vr}{\varepsilon}
\newtheorem{thm}{Theorem}[section]
\newtheorem{defn}[thm]{Definition}
\begin{document}

\title[Comparison of hyperbolic and  parabolic chemotaxis models]{A numerical comparison between degenerate parabolic and quasilinear hyperbolic models of cell movements under chemotaxis}

\author[M. Twarogowska, R. Natalini, and M. Ribot]{M. Twarogowska$^1$ \and R. Natalini$^1$ \and M. Ribot$^2$ }

\thanks{ {\emph{keywords and phrases: }}chemotaxis, quasilinear hyperbolic problems with source, degenerate parabolic problems, comparison between parabolic and hyperbolic models, stationary solutions with vacuum, well-balanced scheme, asymptotic behavior}

\thanks{\vspace{0.2cm}          
$^1$ Istituto per le Applicazioni del Calcolo ``Mauro Picone'', 
Consiglio Nazionale delle Ricerche,  via dei Taurini 19, I-00185 Roma, Italy
({\tt mtwarogowska@gmail.com}, {\tt roberto.natalini@cnr.it}).}

\thanks{$^2$ Laboratoire J. A. Dieudonn\'e, UMR CNRS 7351, Universit\'e de Nice-Sophia Antipolis,
 Parc Valrose, F-06108 Nice Cedex 02, France
\&  Project Team COFFEE, INRIA Sophia Antipolis, France
 ({\tt ribot@unice.fr}).}

\begin{abstract}
We consider two models which were both designed to describe the movement of eukaryotic cells responding to chemical signals. Besides a common standard parabolic equation for the diffusion of a chemoattractant, like chemokines or growth factors, the two models differ for the equations describing the movement of cells. The first model is based on a quasilinear hyperbolic system with damping, the other one on a degenerate parabolic equation. The two models have the same stationary solutions, which may contain some regions with vacuum. We first explain in details how to discretize the quasilinear hyperbolic system through an upwinding technique, which uses an adapted reconstruction, which is able to deal with the transitions to vacuum. Then we concentrate on the analysis of asymptotic preserving properties of the scheme towards a discretization of the parabolic equation, obtained in the large time and large damping limit, in order to  present a numerical comparison between the asymptotic behavior of these two models. Finally we perform an accurate numerical comparison of the two models in the time asymptotic regime, which shows that the respective solutions have a quite different behavior for large times. 
\end{abstract}

\maketitle


\section{Introduction}\label{sec:introduction}

The movement of cells, bacteria or  other microorganisms under the effect of a chemical stimulus, represented by a chemoattractant, such as chemokines or growth factors, has been widely studied in mathematics in the last two decades, see \cite{H, M1,M2, Pe}, and various models involving partial differential equations have been proposed to describe this evolution. 
The basic unknowns in these chemotactic models are the density of individuals and the concentrations of some chemical attractants. One of the most considered models is the Patlak-Keller-Segel system \cite{KS,Patlak}, where the evolution of the density of cells is described by a parabolic equation, and the concentration of  a chemoattractant is generally given by a parabolic or elliptic equation, depending on the different regimes to be described and on the authors' taste. The behavior of this systems is quite well known now, at least for linear diffusions: in the one-dimensional case, the solution is always global in time \cite{NY07}, while in two and more dimensions the solutions exist globally in time
or blow up according to the size of the initial data, see \cite{CC08,CCE12} and references therein, and see the recent result of global existence for large data in the parabolic-parabolic case \cite{Biler}. However, a drawback of this model is that the diffusion leads alternatively to a fast dissipation or an explosive behavior, while in general, from a biological point of view, it is much more interesting to observe  the creation of patterns and permanent structures. 
 
In order to avoid these drawbacks and to improve the accuracy of the transient description, some modifications  of the original Keller-Segel formulation were introduced to prevent overcrowding, by taking into account the volume filling effect; see \cite{H,Pe,HP2001,PH2002}. For instance, in \cite{Kowalczyk2005, CCA06}, a  nonlinear diffusion is considered. More precisely, denoting by  $\rho(x,t)$  the density of cells and by $\phi(x,t)$  the concentration of a generic chemoattractant, the Keller-Segel-like system with nonlinear diffusion reads
\begin{equation}\label{eq:KellerSegel}
\left\{\begin{array}{ll}
\rho_{t}=P(\rho)_{xx}-\chi(\rho\phi_{x})_{x},\\
\delta \phi_{t}=D\phi_{xx}+a\rho-b\phi,
\end{array}\right.
\end{equation}
where $\chi, \, D, \, a$ and $b$ are given positive parameters.  The cells move following the direction of the gradient of the  concentration of chemoattractant with a response coefficient $\chi$; they also  diffuse and
 $P$ is  a phenomenological, density dependent function, which is usually given by a pressure law for isentropic gases, such as
\begin{equation}\label{eq:pressure_law}
P(\rho)=\kappa \rho^{\gamma},\qquad\gamma>1, \qquad  \kappa>0,
\end{equation}
which is intended to prevent the overcrowding of cells. Besides, the evolution of chemoattractant is still given by a linear diffusion equation with a source term which depends on $\rho$.
The chemoattractant is released by the cells, diffuses in the environment and it is  degraded in  finite time. The positive parameters $D,a,b$ are respectively its diffusion coefficient, the production rate, which is proportional to the cell density, and  the degradation rate. If $\delta=1$, we consider a parabolic-parabolic model and in the case where $\delta=0$, we deal with a parabolic-elliptic model.

Now, it is also  expected that a hyperbolic model will enable us to observe intermediate organized structures, like aggregation patterns, at a finer scale \cite{Perthame_survey}. In \cite{Dolak_Hillen2003, Hillen_Stevens2000} the advantage of the hyperbolic approach over the parabolic one was considered in the case of a semilinear model of chemotaxis based on the Cattaneo law. In particular, the authors described qualitatively  some experiments of patterns formation. Here, we focus on  a quasilinear hyperbolic model  of chemotaxis  introduced by Gamba et al. \cite{Gamba2} to describe the early stages of the vasculogenesis process.  
 This model  writes as a hyperbolic-parabolic system for the following unknowns: the density of cells $\rho(x,t)$,  their momentum  $\rho u(x,t)$ and  the concentration  $\phi(x,t)$ of a chemoattractant:
\begin{equation}\label{eq:main_system}
\left\{\begin{array}{l}
\displaystyle{\rho_{t}+(\rho u)_{x}=0,}\\
\displaystyle{(\rho u)_{t}+\left(\rho u^{2}+P(\rho)\right)_{x}=-\alpha\rho u+\chi\rho\phi_{x},}\\
\displaystyle{\phi_{t}=D\phi_{xx}+a\rho-b\phi}.
\end{array}\right.
\end{equation}
The positive constants $\chi$  and $\alpha$ measure respectively  the strength of the cells response to the concentration of the chemical substance and   the strength of the damping forces.  The pressure $P$ is still given by  the pressure law for isentropic gases \eqref{eq:pressure_law}.  This model of chemotaxis has been introduced to describe the results of in vitro experiments performed by Serini et al. \cite{Gamba1} using human endothelial cells which, randomly seeded on a matrigel, formed complex patterns with structures depending on the initial number of cells. 

 Although analytical results about this model are still far from being complete,  for the Cauchy problem on the whole space and in all space dimensions, so with no boundary conditions, it is possible to prove the global existence of smooth solutions if the initial datum is a small perturbation of a small enough constant state, see \cite{thD,DS12}. In the case of the one dimensional boundary value problem, when the differential part is linearized,  the global existence and the time asymptotic decay of the solutions were proved  in \cite{gumanari}, if the initial data are small perturbations of stable constant stationary  states. To complete the analytical study of the quasilinear model there are some clear difficulties. The first one lies  in  the appearance of regions of vacuum during the evolution of the time solution, since  the hyperbolic part of the model degenerates as the eigenvalues coincide; as far as we know, the only related results are given in \cite{masmoudi, xumin}, and they are about the local existence of solutions for the Euler equations with damping and vacuum, but without chemotaxis.

From a more biological point of view, the appearance of non constant solutions with a succession of regions with high density of cells and regions of vacuum, can be put in correspondence with the formation of patterns,  such as a network of blood capillaries. 
In \cite{NRT}, present authors analyzed the existence of some non-constant steady states to model \eqref{eq:main_system} on a one dimensional bounded domain. In particular, for the pressure law \eqref{eq:pressure_law}  with $\gamma=2$, a complete description of the stationary solutions formed of one region of positive density near the boundary and one region of vacuum
was given.  Numerical simulations also shown that such solutions are stable and can be found as asymptotic states of the system \eqref{eq:main_system} even for strictly positive initial data. In the following, we will call "bump"  a region with a nonnegative density  surrounded by  two regions of vacuum, as shown in blue in Figure \ref{fig:SolStat}, and a "lateral half bump" will be a bump cut in its middle and stuck to an extremity of the interval, as shown in red in the same Figure  \ref{fig:SolStat}.
Other stationary configurations with several bumps have been also observed numerically as asymptotic states of the model in \cite{NRT}. These configurations are described in details with a comparison of their energy values in \cite{BCR}. 

Remark that in the case of bounded domains with no-flux boundary conditions, stationary solutions for both systems \eqref{eq:main_system} and \eqref{eq:KellerSegel} coincide and it is worth exploring if the asymptotic states of the two systems are the same or not. Actually, one may expect the Keller-Segel type model \eqref{eq:KellerSegel}  with $\delta=0$ (i.e.: the parabolic-elliptic case) to be the  large time and large damping limit of the hyperbolic system  \eqref{eq:main_system}, and this is actually the case in \cite{Marcati} for the case    without chemotaxis or  in \cite{DiFrancesco_Donatelli} for our case, both results being proved only on unbounded domains.  In this paper, our main goal is  to make a careful comparison  of the two models    \eqref{eq:main_system} and  \eqref{eq:KellerSegel}  with $\delta=1$,  by analyzing numerically  their actual asymptotic behavior. In particular, we are able to exhibit some sets of   initial data and some parameters such that the  two systems  converge asymptotically  to two different stationary solutions, namely two solutions with a different number of bumps. In that case, the diffusive Keller-Segel model \eqref{eq:KellerSegel} seems  to be more inclined to merge  bumps together, so that the asymptotic solution contains a smaller number of bumps than the asymptotic solution for the hyperbolic system  \eqref{eq:main_system}, often after a long transient where nothing happens. Some similar phenomena  which are referred as metastability of patterns,  were observed in the case of  a  Keller-Segel type model with linear diffusion and a  logistic chemosensitive function in  \cite{HP2001, PH2005, Dolak_Schmeiser}, see Subsection \ref{generic} for more details.

In order to perform such a comparison,  we need  first  to find an accurate scheme for the hyperbolic system to make a reliable comparison of the two models. The approximation of this system needs special care due to the presence of vacuum states and emergence of non-constant steady states. More precisely, the discretization procedure  has to generate non-negative solutions with finite speed of propagation and should  resolve properly the non constant equilibria, characterized by a vanishing flux. Such problems are well-known when dealing with hyperbolic equations with sources, see for instance \cite{Natalini_Ribot, Gosse_chemo2, gosse_chemo3, gosse_book}.  For that purpose,  we consider the well-balanced scheme proposed in \cite{NRT} and based on the Upwinding Sources at Interfaces methodology \cite{BPV,Bouchut_book,Perthame_Simeoni}. In   \cite{NRT},  we used a  hydrostatic reconstruction, introduced by Audusse et al. \cite{Audusse} in the case of shallow-water  equations and by Bouchut, Ounaissa and Perthame \cite{BOP} in the case of Euler equations with large damping. To use this approach, we compute the  reconstructed  interface variables by integrating the equation for  stationary solutions with a constant velocity. According to the form of the equation we integrate, two different reconstructions can be found; both lead to schemes that are consistent with the hyperbolic problem, preserve the  non-negativity of the density, and are exact on non-constant steady states. However, we show in this paper  that only one  of these two schemes  is asymptotically consistent with a conservative scheme for the parabolic model in  the large time and  large damping limit, and therefore it will be the one used in our comparison. 
Another improvement of the scheme described here, with respect with \cite{NRT}, is the implicit treatment of the damping term, which solves more accurately  the vacuum states and the flux on the non constant equilibria.

This paper is organized as follows: after a brief recall  in Section \ref{recall} about the structure of the stationary solutions with vacuum, found in  \cite{NRT} and \cite{BCR},  we propose  in Section \ref{scheme} two different numerical schemes for the quasilinear hyperbolic  system \eqref{eq:main_system} based on well-balanced techniques, with a particular care for their asymptotic preserving property. Then,  in Section \ref{NumHyp}, we show some numerical evidences of the behavior of these schemes   in order to choose a well adapted scheme. Finally, in Section \ref{NumComp}, we present an accurate scheme for the parabolic system \eqref{eq:KellerSegel}, based on the diffusive relaxation techniques of \cite{ANT}, and we perform a careful numerical comparison between the asymptotic solutions for  systems  \eqref{eq:main_system} and for system \eqref{eq:KellerSegel}.

\section{Stationary solutions  with vacuum}\label{recall}

In \cite{NRT} and  \cite{BCR}, we noticed that in the particular case $\gamma=2$, it is possible   to compute explicitly  and  classify the stationary solutions with vacuum of the two systems \eqref{eq:main_system} and \eqref{eq:KellerSegel}, which obviously coincide. Let us recall briefly these results to make the paper almost self-contained. 

Consider system \eqref{eq:main_system} on a one dimensional bounded domain  $[0,L]$  with no-flux  boundary conditions, that is
\begin{equation}\label{boundary_conditions}
\rho_{x}(0,\cdot)=\rho_{x}(L,\cdot)=0,\quad \rho u(0,\cdot)= \rho u(L,\cdot)=0,\quad \phi_{x}(0,\cdot)=\phi_{x}(L,\cdot)=0.
\end{equation}
Notice  that,  under these conditions, the stationary solutions of system  \eqref{eq:main_system} and system \eqref{eq:KellerSegel} coincide. Remark also that, when considering  the evolution problem  (\ref{eq:main_system})  or  \eqref{eq:KellerSegel} with the previous boundary conditions \eqref{boundary_conditions},  the mass of the density is constant in time, namely 
\begin{equation}\label{def_mass}
M=\int_{[0,L]} \rho(x,0) \, dx=\int_{[0,L]} \rho(x,t) \, dx, \, \textrm{ for all } t \geq 0.
\end{equation}
Therefore, the mass $M$ will be considered in what follows as a parameter which characterizes  stationary solutions.

\subsection{Constant solution} The first type of solutions is given by the constant solutions, that is to say, for all domain length $L>0$  and all mass $M>0$, there is a solution defined by $\ds (\rho,u,\phi)=(\frac M L, 0, \frac{aM}{bL}) $, which  is the only constant solution in the space of stationary states. This kind of solution is displayed in green in  Figure  \ref{fig:SolStat}.

\subsection{One lateral half bump} \label{sec:lateral_bump} Let us denote by $\ds\omega=\frac{1}{D}\left(\frac{a\chi}{2\kappa }-b\right)$. We assume that $\omega>0$ and $\ds L>\frac{\pi}{\sqrt{\omega}}$. Then there exists a unique, positive solution  (up to symmetry)  of mass $M$ with only one region of positive density and one region of vacuum,   given by the following expression~:
\begin{subequations}\label{lateral_bump}
\begin{equation}\label{bump:rho}
\rho(x)=\left\{\begin{array}{ll}
\ds \frac{\chi}{2\kappa}\phi(x)+K,& \textrm{ for } x\in[0,\bar{x}],
\medskip\\
0, &\textrm{ for } x\in(\bar{x},L],
\end{array}\right.
\end{equation}
and
\begin{equation}\label{bump:phi}
\phi(x)=\left\{\begin{array}{ll}
\ds \frac{2\kappa b K}{\omega\chi D}\frac{\cos(\sqrt{\omega}x)}{\cos(\sqrt{\omega}\bar{x})}-\frac{aK}{\omega D},& \textrm{ for } x\in[0,\bar{x}],
\medskip\\
\ds -\frac{2\kappa  K}{\chi}
\frac{\cosh(\sqrt{\frac{b}{D}}(x-L))}{\cosh(\sqrt{\frac{b}{D}}(\bar{x}-L))}, 
&\textrm{ for } x\in(\bar{x},L].
\end{array}\right.
\end{equation}

The free boundary point $\bar{x}$ is given by 
the only value $\ds \bar{x}\in\frac{1}{\sqrt{\omega}}(\pi/2,\pi)$ which solves the equation
\begin{equation}\label{bump:x}
\sqrt{\frac{b}{\omega D}}\tan(\sqrt{\omega}\bar{x})=\tanh(\sqrt{\frac{b}{D}}(\bar{x}-L)),
\end{equation}
and the constant $K$ is  equal to
\begin{equation}\label{bump:K}
K=\frac{D}{b}\frac{ M \omega^{3/2}}{\tan(\sqrt{\omega}\bar{x})-\sqrt{\omega}\bar{x}}.
\end{equation}
\end{subequations}
If $\omega<0$, or $\omega>0$, but $\ds L<\frac{\pi}{\sqrt{\omega}}$, then there is no half bump solution to the problem. 

An example of such solution is plotted in red in  Figure  \ref{fig:SolStat}.

\subsection{Other configurations}
In  \cite{NRT}, we also proved that, in the case when $\ds L>\frac{2\pi}{\sqrt{\omega}}$, there exists only one solution composed by a central bump surrounded by two regions of vacuum. This solution is symmetric with respect to the middle point of the interval $[0,L]$ and can be computed by sticking two lateral bumps, calculated on a domain $[0, L/2]$ with a mass $M/2$.  This can be seen  in blue in  Figure  \ref{fig:SolStat}.

In  \cite{BCR}, it is  shown that, when a two bumps solution,  which is made by  a concatenation of two bumps (in the case  $\ds L>\frac{4\pi}{\sqrt{\omega}}$), is computed, then one can find an infinite number of such solutions. Namely, there is one parameter free in the family of such solutions. Moreover, for each solution, an energy can be calculated and it can be proved that the symmetric two bumps solution has the higher energy (in cyan  in Figure  \ref{fig:SolStat}), whereas the solution with no vacuum  for one of the two bumps has the lower one (in black  in Figure  \ref{fig:SolStat}). It is also possible to construct some 3-bumps solutions with 2 parameters, and so on. We expect all these stationary solutions to exist for any other values of $\gamma>1$, but for $\gamma \neq 2$, it is difficult to work, since we have no  more explicit expressions of the solutions. To conclude this section, let us say that,  among all these configurations, the constant solution has the highest energy, whereas  the lateral half bump is the one with lowest energy, see again \cite{BCR}.

In Figure \ref{fig:SolStat}, we can see five different types of stationary solutions for the same parameters of the system, always with $\gamma=2$, same length of the domain $L$ and same mass $M$. The densities are obtained analytically using the above formulas, while the locations of interfaces with vacuum $\bar{x}$ are found by solving equation \eqref{bump:x} numerically.
\begin{figure}
\includegraphics[scale=0.20]{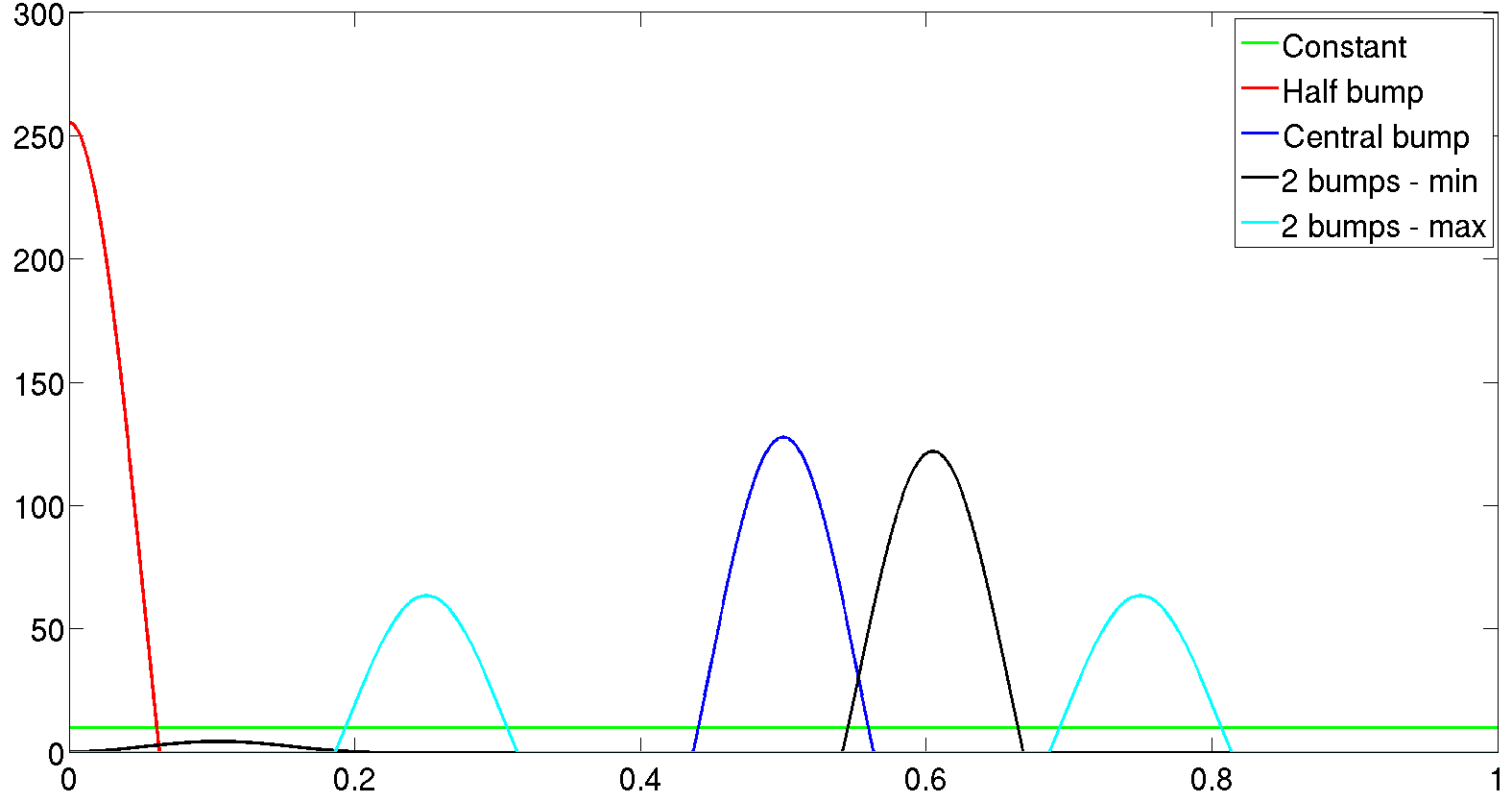}
\caption{Various stationary solutions: constant (in green), one lateral half bump (in red), one central bump (in blue), two bumps with minimal energy (in black), two symmetric bumps with maximal energy (in cyan). The parameters are the following : $\kappa=1,\, \chi=10,\, D=0.1,\, a=20,\, b=10, \, L=1$ and $M=10$.}
\label{fig:SolStat}
\end{figure}

\section{A numerical approximation for the quasilinear hyperbolic model}\label{scheme}

Let us now explain how to construct a well adapted  scheme for system \eqref{eq:main_system}, taking care of being accurate in the approximation of stationary solutions. 
System (\ref{eq:main_system}) couples equations of different natures, i.e. a quasilinear system of conservation laws with sources, which is coupled   with a linear parabolic equation.
The latter is very classical and several methods can be used, as, for instance, finite differences in space and  the classical explicit-implicit Crank-Nicholson method for the time discretization. Now, in this section,  we concentrate on the discretization of the hyperbolic part and we improve the finite volume scheme proposed in \cite{NRT}, based on the Upwinding Sources at the Interfaces technique developed in \cite{Bouchut_book, BPV, Audusse, BOP}.

After a brief recall about  the numerical framework, we analyze the asymptotic preserving property in the large time and large damping  limit, which is given by the parabolic system  \eqref{eq:KellerSegel} with $\delta=0$. We show that E-reconstruction \eqref{eq:Ereconstruction}   is not asymptotic preserving, whereas the P-reconstruction \eqref{eq:Preconstruction}  is consistent, in the large time and large damping  limit, with a conservative scheme for the parabolic problem.  

\subsection{Finite volume, well-balanced scheme}

Denoting by  $U=(\rho,\rho u)^T$ the vector of two unknowns, respectively density and momentum,  the   hyperbolic part of system  \eqref{eq:main_system} can be written in the following form
\begin{subequations}\label{eq:hyp_matrix}
\begin{equation}
\label{eq:hyp_matrix_system} U_{t}+F(U)_{x}=S(U),
\end{equation}
where $F$ is the flux function and $S$ the source term, i.e.
\begin{equation}
\label{eq:hyp_matrix_vectors}F(U)=
\left(
\begin{array}{l}
F^{\rho} \\
F^{\rho u}
\end{array}
\right)
=\left(
\begin{array}{l}\rho u
\\
 \rho u^2+P(\rho)
 \end{array}
\right)
 ,\quad S(U)=
 \left(
\begin{array}{l}
 0
\\ 
-\alpha\rho u+\chi\rho\phi_x
\end{array}
\right).
\end{equation}
\end{subequations}

According to the framework of finite volume schemes, we divide the interval $[0,L]$ into $N$ cells  $C_{i}=[x_{i-1/2},x_{i+1/2})$,  centered at the nodes $x_{i}, \, 1\leq i\leq N$. In the following, we  assume that all the cells have the same length $\ds \Delta x=x_{i+1/2}-x_{i-1/2}$.
We consider  a semi-discrete space approximation of the solution $U$ to system \eqref{eq:hyp_matrix} on cell $C_{i}$, i.e. we compute  an approximation of the cell average of the solution at time $t>0$, that is to  say  
\begin{equation*}
U_{i}(t)=\frac{1}{\Delta x}\int_{x_{i-1/2}}^{x_{i+1/2}} U(x, t) \, dx.
\end{equation*}
We denote now by $ U_{i}^{n}$ an approximation of function $U_{i}(t)$ at discrete time $\ds t_{n}=n \Delta t$, where $\Delta t$ is the time step. 
Let us consider a numerical flux $\mathcal{F}=
\left(
\begin{array}{l}
\mathcal{F}^{\rho} \\
\mathcal{F}^{\rho u}
\end{array}
\right)$   consistent with the continuous flux $F$, that is to say that it satisfies
\begin{equation}\label{consistency}
\mathcal{F}(U,U)=F(U) \; \textrm{ for all } U.
\end{equation}
A general, fully discrete, explicit finite volume scheme for (\ref{eq:hyp_matrix}) can be written as 
\begin{subequations}\label{eq:scheme_main}
\begin{equation}
\frac{\Delta x}{\Delta t}\left(U_{i}^{n+1}-U_{i}^{n}\right)+\mathcal{F}(U_{i+1/2}^{n,-},U_{i+1/2}^{n,+})-\mathcal{F}(U_{i-1/2}^{n,-},U_{i-1/2}^{n,+})=\mathcal{S}_{i}^{n},
\end{equation}
where  the values $U_{i+1/2}^{n,\pm}$ are interface variables computed at the point $x_{i+1/2}$, and $\mathcal{S}_{i}^n$ is an approximation of the source term.

According to  the results in \cite{NRT},  we decided to discretize  the homogeneous part using the Suliciu scheme adapted to vacuum, see also \cite{Bouchut_book}.
Notice that in the following, we may write $\ds\mathcal{F}(\rho_{i+1/2}^{n,-},u_{i+1/2}^{n,-},\rho_{i+1/2}^{n,+},u_{i+1/2}^{n,+}) $ instead of $\ds \mathcal{F}(U_{i+1/2}^{n,-},U_{i+1/2}^{n,+})$ when necessary. 

\subsection{Upwinding Sources at the Interfaces and reconstructions}\label{Reconstruct}

Following  \cite{NRT}, we compute the interfaces variables in order to preserve non-constant stationary solutions with constant velocity. We also need to upwind the source term and we use  the following ansatz~:
\begin{equation}
\mathcal{S}_{i} ^n= \mathcal{S}_{i+1/2}^{n,-}+\mathcal{S}_{i-1/2}^{n,+}= 
\left(\begin{array}{c}
0 \\
P\left(\rho_{i+1/2}^{n,-}\right)-P(\rho_{i}^n)
+P(\rho_{i}^n)-P\left(\rho_{i-1/2}^{n,+}\right)
\end{array}\right).
\end{equation}
\end{subequations}
This ansatz is based on the equation satisfied by  stationary solutions i.e. $F(U)_{x}=S(U)$ and the fact that the momentum of a stationary solution is vanishing, thanks to the boundary conditions \eqref{boundary_conditions}.

We also reconstruct  the interface variables $U_{i+1/2}^{n,\pm}$  using the local equilibrium 
\begin{equation}\label{eq:equilibriumEq}
u_{x}=0,\qquad \left(P(\rho)\right)_{x}=\chi\rho\phi_{x}-\alpha\rho u.
\end{equation}
To derive our first method, we observe that the second equation can be rewritten in terms of the internal energy $e(\rho)$  defined by $\ds e'(\rho)=\frac{P(\rho)}{\rho^2}$, which yields
\begin{equation}\label{defPsi}
(\Psi(\rho)-\chi\phi)_{x}=-\alpha u, \textrm{ with } \Psi(\rho)=e(\rho)+\frac{P(\rho)}{\rho}=\frac{\kappa\gamma}{\gamma-1}\rho^{\gamma-1}.
\end{equation}
From this equation, we obtain  the so-called  E-reconstruction, by an approximate integration on $[x_{i},x_{i+1/2}^{-})$ and $(x_{i+1/2}^{+},x_{i+1}]$, by taking:
\begin{subequations}\label{eq:Ereconstruction}
\begin{equation}
u_{i+1/2}^{n,-}=u_{i}^n,\qquad u_{i+1/2}^{n,+}=u^n_{i+1}
\end{equation}
and
\begin{equation}
\left\{\begin{array}{l}
\Psi\left(\rho_{i+1/2}^{n,-}\right)=\left[\Psi(\rho_{i}^n) + \chi(\min(\phi_{i}^n,\phi_{i+1}^n)-\phi_{i}^n)-\alpha(u_{i}^n)_{+}\Delta x\right]_{+},
\medskip\\
\Psi\left(\rho_{i+1/2}^{n,+}\right)=\left[\Psi(\rho_{i+1}^n) + \chi(\min(\phi_{i}^n,\phi_{i+1}^n)-\phi_{i+1}^n)+\alpha(u_{i+1}^n)_{-}\Delta x\right]_{+}.
\end{array}\right.
\end{equation}
\end{subequations}
We proved in \cite{NRT} that the scheme \eqref{eq:scheme_main}, coupled with the E- reconstruction \eqref{eq:Ereconstruction}, is consistent with system  (\ref{eq:main_system}) away from vacuum, preserves the non-negativity of the density and preserves  the stationary solutions with vanishing velocity.

Another way to proceed  is to integrate directly the equilibrium equation \eqref{eq:equilibriumEq} to get a different reconstruction, namely:
\begin{subequations}\label{eq:Preconstruction}
\begin{equation}
u_{i+1/2}^{n,-}=u_{i}^n,\qquad u_{i+1/2}^{n,+}=u_{i+1}^n
\end{equation}
and
\begin{equation}
\left\{\begin{array}{l}
P\left(\rho_{i+1/2}^{n,-}\right)=\left[P(\rho_{i}^n) + \chi\overline{\rho}_{i+1/2}^n(\min(\phi_{i}^n,\phi_{i+1}^n)-\phi_{i}^n)-\alpha\rho_{i}^n(u_{i}^n)_{+}\Delta x\right]_{+},
\medskip \\
P\left(\rho_{i+1/2}^{n,+}\right)=\Bigl[P(\rho_{i+1}^n) + \chi\overline{\rho}_{i+1/2}^n(\min(\phi_{i}^n,\phi_{i+1}^n)-\phi_{i+1}^n)\\
\qquad \qquad \qquad \qquad \qquad \qquad \qquad \qquad \qquad  \qquad \qquad +\alpha\rho_{i+1}^n(u_{i+1}^n)_{-}\Delta x\Bigr]_{+},
\end{array}\right.
\end{equation}
\end{subequations}
where $\ds \overline{\rho}_{i+1/2}^n=\frac{1}{2}(\rho_{i}^n+\rho_{i+1}^n)$. In the following this scheme will be mentioned as the P-reconstruction.

 The computations  to prove that the  scheme  \eqref{eq:scheme_main}, coupled with  the P-reconstruction \eqref{eq:Preconstruction}  is consistent, preserves the non-negativity of the density and preserves  the stationary solutions with vanishing velocity,  are analogous to the ones performed in \cite{NRT} for the E-reconstruction. Moreover, as shown in \cite{NRT}, this scheme is stable under a condition of the form 
 \begin{equation*}
\sigma(U_{i+1/2}^{n,-},U_{i+1/2}^{n,+})\Delta t\leq\Delta x,
\end{equation*}
where $\ds \sigma(U_{i}, U_{i+1})$ is a numerical speed such that the 
 solver $\ds\mathcal{F}=(\mathcal{F}^{\rho},\mathcal{F}^{\rho u })^t$ for the  homogeneous system $\ds U_{t}+F(U)_{x}=0 $  preserves the non negativity of $\rho$ by interface; see \cite{NRT} or  \cite{Bouchut_book} for more details. 

Next, we propose another possible improvement for our scheme, which consists in discretizing implicitly the damping term.
Indeed, in  the previous  reconstructions \eqref{eq:Ereconstruction} and \eqref{eq:Preconstruction}, the damping term was included in the definition of the  interface densities. However, since it is a stiff term, it is more natural to deal with an  implicit treatment. Therefore,  the damping term is considered in the discretization equation and disappears in the  reconstruction formula;  the second equation of the  fully discrete scheme \eqref{eq:scheme_main} is therefore replaced by~:
\begin{subequations}\label{eq:implicit}
\begin{eqnarray}\label{eq:scheme_implicit}
(\rho u)_{i}^{n+1} =(\rho u)_{i}^{n} &-& \left.\frac{\Delta t}{\Delta x}\left(\mathcal{F}^{\rho u }(\hat{U}_{i+1/2}^{n,-},\hat{U}_{i+1/2}^{n,+})-\mathcal{F}^{\rho u }(\hat{U}_{i-1/2}^{n,-}, \hat{U}_{i-1/2}^{n,+})\right)\right.
\\
\nonumber&+&\left.\frac{\Delta t}{\Delta x}\left(P\left(\hat{\rho}_{i+1/2}^{n,-}\right)-P\left(\hat{\rho}_{i-1/2}^{n,+}\right)\right)
-\alpha \Delta t (\rho u)_{i}^{n+1}, \right.
\end{eqnarray}
with 
\begin{equation}\label{eq:velocity_implicit}
\hat{u}_{i+1/2}^{n,-}=u_{i}^n,\qquad \hat{u}_{i+1/2}^{n,+}=u_{i+1}^n
\end{equation}
and the interface values are given respectively by
\begin{equation}\label{eq:Ereconstruction_implicit}
\left\{\begin{array}{l}
\Psi\left(\hat{\rho}_{i+1/2}^{n,-}\right)=\left[\Psi(\rho_{i}^n) + \chi(\min(\phi_{i}^n,\phi_{i+1}^n)-\phi_{i}^n)\right]_{+},
\qquad\\
\Psi\left(\hat{\rho}_{i+1/2}^{n,+}\right)=\left[\Psi(\rho_{i+1}^n) + \chi(\min(\phi_{i}^n,\phi_{i+1}^n)-\phi_{i+1}^n)\right]_{+},
\end{array}\right.
\end{equation}
for the E-reconstruction, and by 
\begin{equation}\label{eq:Preconstruction_implicit}
\left\{\begin{array}{l}
P\left(\hat{\rho}_{i+1/2}^{n,-}\right)=\left[P(\rho_{i}^n) + \chi\overline{\rho}_{i+1/2}^n(\min(\phi_{i}^n,\phi_{i+1}^n)-\phi_{i}^n)\right]_{+},
\qquad\\
P\left(\hat{\rho}_{i+1/2}^{n,+}\right)=\left[P(\rho_{i+1}^n) + \chi\overline{\rho}_{i+1/2}^n(\min(\phi_{i}^n,\phi_{i+1}^n)-\phi_{i+1}^n)\right]_{+},
\end{array}\right.
\end{equation}
\end{subequations}
for the P-reconstruction. Next, let us study the convergence of these various schemes   in  the large time and large damping limit.

\subsection{The asymptotic preserving property}\label{AP}

We are now going to show that, using a large time and large damping   scaling (LTLD  in the following) in system \eqref{eq:main_system}, we obtain a chemotaxis model based on the  porous medium equation. Recall that, in \cite{Marcati}, it was proven  that 
the velocity of the compressible Euler system without chemotaxis satisfies the Darcy law in the large time limit, while in \cite{DiFrancesco_Donatelli} the asymptotic convergence of system \eqref{eq:main_system} to  parabolic systems was analyzed under various scalings. Both papers were dealing with the case of the whole space.

Here, on the contrary, we are in a bounded interval, and so space dilations are not considered. Therefore, let $\ds\varepsilon=\frac{1}{\alpha}>0$ and define the following scaled variables, to perform the LTLD limit:
\begin{equation*}
\tau=\varepsilon t,\quad
v\ve(x,\tau) = \frac{u(x,t)}{\varepsilon},\quad
\rho\ve(x,\tau) = \rho(x,t),\quad
\phi\ve(x,\tau) = \phi(x,t).
\end{equation*}
Hence, system \eqref{eq:main_system} can be rewritten for the new unknowns as~:
\begin{equation}\label{eq:main_system_rescaled}
\left\{\begin{array}{l}
\ds{\rho\ve_{\tau}+(\rho\ve v\ve)_{x}=0}\\
\ds{\varepsilon^2(\rho\ve v\ve)_{\tau}+\left[\varepsilon^2\rho\ve (v\ve)^2+P(\rho\ve)\right]_{x} = \chi\rho\ve\phi\ve_{x}-\rho\ve v\ve}\\
\ds{\varepsilon\phi\ve_{\tau} = D\phi\ve_{xx}+a\rho\ve-b\phi\ve}
\end{array}\right.
\end{equation}
As $\varepsilon\rightarrow 0$, which corresponds to the LTLD limit, we obtain the following parabolic-elliptic system, see \cite{DiFrancesco_Donatelli} for more analytical details, that is system \eqref{eq:KellerSegel}  with $\delta=0$~:
\begin{equation}\label{eq:parabolic_elliptic}
\left\{\begin{array}{l}
\ds\rho_{\tau}=\left[P(\rho)_{xx}-\chi(\rho\phi_{x})_{x}\right]\medskip
\\
\ds0=D\phi_{xx}+a\rho-b\phi.
\end{array}\right.
\end{equation}
 
 In this section,  we study the asymptotic preserving property of the numerical scheme \eqref{eq:scheme_main} with the reconstruction \eqref{eq:Ereconstruction} or \eqref{eq:Preconstruction} for system \eqref{eq:main_system}. Following \cite{BOP}, we compute  the LTLD limit of the  first component of the numerical flux  $\mathcal{F}^{\rho}$ obtained with  the two methods of reconstruction for  system \eqref{eq:main_system_rescaled} and we analyze their  consistency with  system \eqref{eq:parabolic_elliptic}. 
 More precisely, we say that a numerical flux is asymptotic preserving in the LTLD limit, if we have the following local expansion:
\begin{equation*}
\mathcal{F}^{\rho}\left(\rho_{i+1/2}^{\vr,n,-},u_{i}^n,\rho_{i+1/2}^{\vr,n,+},u_{i+1}^n\right)= \mathcal{F}^{n,\textrm{Par}}_{i+1/2}+O(\vr\Delta x)+O(\vr^2),
\end{equation*} 
where $\mathcal{F}^{n,\textrm{Par}}_{i+1/2}$ is a consistent and conservative numerical flux for the parabolic equation~:
\begin{equation*}
\rho_{t}=\left(P(\rho)_{x}-\chi(\rho\phi_{x})\right)_{x}.
\end{equation*}
We show in the following theorem that this property depends on the reconstruction we choose.

To handle correctly the proof of the theorem, we need the following conditions on the numerical flux~:
\begin{defn}\label{thedef}
A consistent numerical flux function $ \mathcal{F}$  is said to be strongly consistent if it  satisfies the two following conditions~:
\begin{itemize}
\item  if $\ds \mathcal{F}^{\rho u}(r,0,R,0)=P(r)$ then $r=R$;
\item    if $\ds \mathcal{F}^{\rho u}(r,0,R,0)=P(R)$ then $r=R$.
\end{itemize}
\end{defn}

This definition is derived from a condition given in \cite{BOP} and is satisfied, in particular, for the  HLL flux, the HLL-Roe flux  and the Suliciu relaxation flux adapted to vacuum (see \cite{Bouchut_book}). In the annex \ref{annex}, the reader can find a discussion about this condition and the computations to prove that the previous fluxes satisfy it.

\begin{thm}\label{thethm}
Assume  that the hyperbolic numerical flux $\mathcal{F}$ is consistent with the continuous flux $F$ defined in equation \eqref{eq:hyp_matrix_vectors}
and strongly consistent in the sense of Definition \ref{thedef}.
Assume also the following asymptotic expansions for  $\rho_{i}^n$ and $u_{i}^n$ when $\varepsilon\rightarrow 0$:
\begin{equation}\label{eq:asExpansion}
\rho_{i}^{\vr,n}=r_{i}^n+\vr r^{n,(1)}_{i}+O(\vr^2),\qquad u_{i}^{\vr,n}=\vr v_{i}^{\vr,n}=\vr v_{i}^{n,(0)}+\vr^2 v_{i}^{n,(1)}+O(\vr^3).
\end{equation}
Then, we have that
\begin{equation*}
\begin{split}
\mathcal{F}^{\rho}\left(\rho_{i+1/2}^{\vr,-},u_{i},\rho_{i+1/2}^{\vr,+},u_{i+1}\right)&=-\frac{1}{\Delta x }\left( P(r_{i+1}) -P(r_{i}) + \chi\overline{r}_{i+1/2}(\phi_{i}-\phi_{i+1})\right) \\
&+O(\vr\Delta x)+O(\vr^2)
\end{split}
\end{equation*}
in the case of the P-reconstruction and 
\begin{equation*}
\begin{split}
\mathcal{F}^{\rho}\left(\rho_{i+1/2}^{\vr,-},u_{i},\rho_{i+1/2}^{\vr,+},u_{i+1}\right)&=-\frac{r_{i}}{\Delta x }\left( \Psi(r_{i+1}) -\Psi(r_{i}) + \chi(\phi_{i}-\phi_{i+1})\right)\\
&+O(\vr\Delta x)+O(\vr^2).
\end{split}
\end{equation*}
in the case of the E-reconstruction. 

Therefore, in the case of the isentropic gases pressure \eqref{eq:pressure_law}, only the P-reconstruction is asymptotic preserving in the LTLD limit.
\end{thm}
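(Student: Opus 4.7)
The strategy is to extract the leading-order behavior of $\mathcal{F}^\rho$ from the discrete momentum balance, which in the LTLD limit reduces to a telescoping relation that, coupled with strong consistency, collapses the two reconstructed densities at every interface. The remaining work is to read off the resulting mass flux from the reconstruction formulas and to observe that the position of the density factor in the reconstruction is what distinguishes the P- and E-cases.

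First, I insert the scaling $\alpha=1/\vr$ and the expansions \eqref{eq:asExpansion} into the second line of the scheme \eqref{eq:scheme_main}. The damping contribution appearing in \eqref{eq:Preconstruction} (resp.\ \eqref{eq:Ereconstruction}) becomes $r_i(v_i^{(0)})_+\Delta x$ (resp.\ $(v_i^{(0)})_+\Delta x$) at order $\vr^0$. Because $(\rho u)_i=\vr(\rho v)_i$, after passing to the slow time $\tau=\vr t$ the discrete time derivative of momentum is of order $\vr^2$ and drops out, so that at order $\vr^0$ the momentum update reads
\begin{equation*}
\mathcal{F}^{\rho u}_{i+1/2}-\mathcal{F}^{\rho u}_{i-1/2}=P\!\left(\rho_{i+1/2}^{-,(0)}\right)-P\!\left(\rho_{i-1/2}^{+,(0)}\right).
\end{equation*}
The combination $\mathcal{F}^{\rho u}_{i+1/2}-P(\rho_{i+1/2}^{-,(0)})$ is therefore independent of $i$, and the no-flux boundary condition \eqref{boundary_conditions} together with consistency $\mathcal{F}(U,U)=F(U)$ fixes this constant to zero. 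Hence at every interface $\mathcal{F}^{\rho u}(\rho_{i+1/2}^{-,(0)},0,\rho_{i+1/2}^{+,(0)},0)=P(\rho_{i+1/2}^{-,(0)})$, and the strong consistency of Definition \ref{thedef} forces $\rho_{i+1/2}^{-,(0)}=\rho_{i+1/2}^{+,(0)}$, a common value I denote $r^{(0)}_{i+1/2}$.

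Second, I equate the two reconstruction formulas at this common value. For the P-reconstruction, using the elementary identity $(\min(\phi_i,\phi_{i+1})-\phi_i)-(\min(\phi_i,\phi_{i+1})-\phi_{i+1})=\phi_{i+1}-\phi_i$, I obtain
\begin{equation*}
r_i(v_i^{(0)})_++r_{i+1}(v_{i+1}^{(0)})_- = -\frac{1}{\Delta x}\!\left[P(r_{i+1})-P(r_i)+\chi\bar r_{i+1/2}(\phi_i-\phi_{i+1})\right].
\end{equation*}
The analogous manipulation for the E-reconstruction, in which no density prefactor multiplies the chemotactic or damping contribution, delivers the corresponding identity $(v_i^{(0)})_++(v_{i+1}^{(0)})_-=-\frac{1}{\Delta x}\!\left[\Psi(r_{i+1})-\Psi(r_i)+\chi(\phi_i-\phi_{i+1})\right]$.

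Third, I identify $\mathcal{F}^\rho$ itself. Because $\rho_{i+1/2}^{\pm,(0)}$ coincide, the Riemann problem fed to the homogeneous Suliciu-type solver has no density jump at order $\vr^0$, so a direct expansion (leaning on the formulas gathered in the annex and on strong consistency) gives $\mathcal{F}^\rho=(u_i)_+\rho_{i+1/2}^{-,(0)}+(u_{i+1})_-\rho_{i+1/2}^{+,(0)}+O(\vr^2)$. Substituting $u_i=\vr v_i^{(0)}+O(\vr^2)$ and replacing $\rho_{i+1/2}^{\pm,(0)}$ by $r_i,r_{i+1}$ up to $O(\Delta x)$ errors, then inserting the identity of Step~2, yields in the P-case exactly the claimed leading term, while in the E-case the common density $r^{(0)}_{i+1/2}$ factors out of the upwinded velocity combination and reduces to $r_i$ modulo $O(\vr\Delta x)$.

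The delicate point is Step~3, where the precise form of the solver at the collapsed interface state must be pinned down; this is the motivation for Definition \ref{thedef}. The asymmetry that prevents the E-reconstruction from being asymptotic preserving for the isentropic pressure \eqref{eq:pressure_law} surfaces here: although $r\Psi(r)_x=P(r)_x$ holds continuously, the discrete analogue $r_i[\Psi(r_{i+1})-\Psi(r_i)]/\Delta x$ is not a conservative finite-difference of $P(r)_x$, so the E-limit is not a consistent conservative flux for the parabolic equation, whereas the P-limit is.
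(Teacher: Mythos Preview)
Your Steps~1 and~2 are essentially the paper's argument: the limit of the momentum equation, telescoped via the boundary conditions and strong consistency, collapses the reconstructed densities, and subtracting the two reconstruction formulas at the limit yields the key identity for $r_i(v_i^{(0)})_++r_{i+1}(v_{i+1}^{(0)})_-$ (resp.\ $(v_i^{(0)})_++(v_{i+1}^{(0)})_-$).

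The weakness is your Step~3. You assert that
\[
\mathcal{F}^{\rho}=(u_i)_+\rho_{i+1/2}^{-,(0)}+(u_{i+1})_-\rho_{i+1/2}^{+,(0)}+O(\vr^2),
\]
appealing to ``the formulas gathered in the annex and strong consistency''. But the annex only computes $\mathcal{F}^{\rho u}$ for three specific solvers, and strong consistency (Definition~\ref{thedef}) says nothing about $\mathcal{F}^{\rho}$; the theorem is stated for any consistent, strongly consistent flux, so this upwind identity is not available in general. Also, knowing only that the reconstructed densities agree at order~$\vr^0$ gives $\rho_{i+1/2}^{\vr,+}-\rho_{i+1/2}^{\vr,-}=O(\vr)$, which is too coarse: since $\partial_3\mathcal{F}^{\rho}$ at the diagonal is $O(1)$ for typical solvers, this would only yield an $O(\vr)$ error, not the required $O(\vr\Delta x)+O(\vr^2)$.

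The paper closes this gap by (i) Taylor-expanding $\mathcal{F}^{\rho}$ around the diagonal state $(\rho_{i+1/2}^{\vr,-},u_i,\rho_{i+1/2}^{\vr,-},u_i)$ and using only bare consistency \eqref{consistency} to replace the diagonal value by $\rho_{i+1/2}^{\vr,-}u_i$, and (ii) proving the sharper bound $\rho_{i+1/2}^{\vr,+}-\rho_{i+1/2}^{\vr,-}=O(\vr\Delta x)+O(\vr^2)$ by subtracting the finite-$\vr$ difference of the two reconstruction formulas from its $\vr\to 0$ limit (your Step~2 identity), so that the leading terms cancel. Only then does the decomposition $\rho_i^\vr u_i = r_i(u_i)_+ + r_{i+1}(u_{i+1})_- + O(\vr\Delta x)+O(\vr^2)$ (for P) or $r_i[(u_i)_++(u_{i+1})_-]+O(\vr\Delta x)+O(\vr^2)$ (for E) combine with your Step~2 identity to give the stated results.
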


\begin{proof}
We first pass to the limit   $\vr \to 0$  in the second equation of scheme  \eqref{eq:scheme_main}, using expansions  \eqref{eq:asExpansion}, and we obtain that 
\begin{equation*}
\mathcal{F}^{\rho u}(r_{i+1/2}^{n,-},0,r_{i+1/2}^{n,+},0)-\mathcal{F}^{\rho u}(r_{i-1/2}^{n,-},0,r_{i-1/2}^{n,+},0)= P\left(r_{i+1/2}^{n,-}\right)-P\left(r_{i-1/2}^{n,+}\right).
\end{equation*}
Thanks to the assumption of strong consistency of the flux,
 the relation $r_{i+1/2}^{n,-}=r_{i+1/2}^{n,+}$ gives a unique solution to the previous equation; see annex \ref{annex} for more details. 

We now pass into the limit $\vr \to 0$  in the first equation of scheme  \eqref{eq:scheme_main}. 
The asymptotic expansion of $\mathcal{F}^{\rho}\left(\rho_{i+1/2}^{\vr,n,-},u_{i}^n,\rho_{i+1/2}^{\vr,n,+},u_{i+1}^n\right)$ around a state $\left(\rho_{i+1/2}^{\vr,n,-},u_{i}^n,\rho_{i+1/2}^{\vr,n,-},u_{i}^n\right)$ is, dropping the $n$ index when there is no confusion:
\begin{equation}\label{Estimate}
\begin{split}
&\mathcal{F}^{\rho}\left(\rho_{i+1/2}^{\vr,-},u_{i},\rho_{i+1/2}^{\vr,+},u_{i+1}\right)= \mathcal{F}^{\rho}\left(\rho_{i+1/2}^{\vr,-},u_{i},\rho_{i+1/2}^{\vr,-},u_{i}\right)\\
& +(\rho_{i+1/2}^{\vr,+}-\rho_{i+1/2}^{\vr,-})\left(\partial_{3}\mathcal{F}^{\rho}\right)+  (u_{i+1}-u_{i})\left(\partial_{4}\mathcal{F}^{\rho}\right)  +O(\rho_{i+1/2}^{\vr,+}-\rho_{i+1/2}^{\vr,-})^2\\
&+O  (u_{i+1}-u_{i})^2,
\end{split}
\end{equation}
where $\ds \partial_{i}\mathcal{F}^{\rho}$ is the derivative of $\mathcal{F}^{\rho}$ with respect to the $i$-th variable calculated at point $\left(\rho_{i+1/2}^{\vr,n,-},u_{i}^n,\rho_{i+1/2}^{\vr,n,-},u_{i}^n\right)$ .

Let us now estimate all these terms separately. It is first easy to see that $\ds  (u_{i+1}-u_{i})=\vr \Delta x$, from expansions \eqref{eq:asExpansion}. Now, using the consistency of $\ds \mathcal{F}^{\rho}$, we have :
\begin{equation*}
\mathcal{F}^{\rho}\left(\rho_{i+1/2}^{\vr,-},u_{i},\rho_{i+1/2}^{\vr,-},u_{i}\right) = F^{\rho}(\rho_{i+1/2}^{\vr,-},u_{i})=\rho_{i+1/2}^{\vr,-}u_{i}=(\rho_{i+1/2}^{\vr,-}-\rho_{i}^\vr)u_{i}+ \rho_{i}^\vr u_{i}.
\end{equation*}
Using the P-reconstruction \eqref{eq:Preconstruction}  or the E-reconstruction \eqref{eq:Ereconstruction} for $\rho_{i+1/2}^{\vr,\pm}$ and the asymptotic expansion \eqref{eq:asExpansion},  we have $\ds(\rho_{i+1/2}^{\vr,-}-\rho_{i}^\vr)u_{i}=  O(\vr\Delta x)$.

\textit{Case of the P-reconstruction. }
Let us now estimate the two remaining  terms  $\ds \rho_{i}^\vr u_{i}$ and $\ds  \rho_{i+1/2}^{\vr,+}-\rho_{i+1/2}^{\vr,-}$ in the case of the P-reconstruction. 
Passing to the limit in the reconstruction \eqref{eq:Preconstruction}  when $\vr \to 0$, we obtain
\begin{equation*}
\left\{\begin{array}{l}
P\left(r_{i+1/2}^{-}\right)=\left[P(r_{i}) + \chi\overline{r}_{i+1/2}(\min(\phi_{i},\phi_{i+1})-\phi_{i})- r_{i}(u_{i})_{+}\Delta x\right]_{+},\\
P\left(r_{i+1/2}^{+}\right)=\left[P(r_{i+1}) + \chi\overline{r}_{i+1/2}(\min(\phi_{i},\phi_{i+1})-\phi_{i+1})+ r_{i+1} (u_{i+1})_{-}\Delta x\right]_{+}
\end{array}\right.
\end{equation*}
and using that $r_{i+1/2}^{n,-}=r_{i+1/2}^{n,+}$, we obtain by subtracting the two previous equations and dropping the positive part:
\begin{equation}\label{DiffReconstrPLimit}
0=P(r_{i+1}) -P(r_{i}) + \chi\overline{r}_{i+1/2}(\phi_{i}-\phi_{i+1})+ (r_{i+1}(u_{i+1})_{-}+ r_{i}(u_{i})_{+})\Delta x.
\end{equation}
By the same way, subtracting the two equations \eqref{eq:Preconstruction} and dropping the positive part, we also obtain the following  equation~:
\begin{equation}\label{DiffReconstrP}
\begin{split}
P\left(\rho_{i+1/2}^{\vr,+}\right)&-P\left(\rho_{i+1/2}^{\vr,-}\right)=P(\rho_{i+1}^\vr) -P(\rho_{i}^\vr)
+ \chi\overline{\rho}_{i+1/2}^\vr(\phi_{i}-\phi_{i+1})\\
&+(\rho_{i+1}^\vr(u_{i+1})_{-}+\rho_{i}^\vr(u_{i})_{+})\Delta x.
\end{split}
\end{equation}
Now, subtracting \eqref{DiffReconstrP} and \eqref{DiffReconstrPLimit} gives~:
\begin{equation*}
\begin{split}
P\left(\rho_{i+1/2}^{\vr,+}\right)&-P\left(\rho_{i+1/2}^{\vr,-}\right)=(P(\rho_{i+1}^\vr) -P(\rho_{i}^\vr))-(P(r_{i+1}) -P(r_{i}) )\\
&+((\rho_{i+1}^\vr-r_{i+1})(u_{i+1})_{-}+(\rho_{i}^\vr-r_{i})(u_{i})_{+})\Delta x\\
& + \chi(\overline{\rho}_{i+1/2}^\vr-\overline{r}_{i+1/2})(\phi_{i}-\phi_{i+1}).
\end{split}
\end{equation*}
Under this previous form and using the expansions  \eqref{eq:asExpansion}, it is straightforward that $\ds P\left(\rho_{i+1/2}^{\vr,+}\right)-P\left(\rho_{i+1/2}^{\vr,-}\right)=O(\vr\Delta x)+O(\vr^2)$ and, consequently,  that  $\ds \rho_{i+1/2}^{\vr,+}-\rho_{i+1/2}^{\vr,-}=O(\vr\Delta x)+O(\vr^2)$. 

Finally, let us consider the last term $\ds \rho_{i}^\vr u_{i}$ of \eqref{Estimate}.
Using expansions \eqref{eq:asExpansion}, we decompose it as follows, in order to use relation \eqref{DiffReconstrPLimit}~:
\begin{equation*}
\begin{split}
 \rho_{i}^\vr u_{i}&= \rho_{i}^\vr ((u_{i})_{+}+(u_{i})_{-})=\rho_{i}^\vr (u_{i})_{+}+\rho_{i+1}^\vr (u_{i+1})_{-}+ O(\vr \Delta x),\\
 &=r_{i} (u_{i})_{+}+r_{i+1} (u_{i+1})_{-}+O(\vr^2)+ O(\vr \Delta x)
 \end{split}
\end{equation*}
which gives
\begin{equation*}
 \rho_{i}^\vr u_{i}=-\frac{1}{ \Delta x }\left( P(r_{i+1}) -P(r_{i}) + \chi\overline{r}_{i+1/2}(\phi_{i}-\phi_{i+1})\right)
 +O(\vr^2)+ O(\vr \Delta x).
\end{equation*}
To conclude, all these estimates enable us to write equation \eqref{Estimate}  in the case of the P-reconstruction as~:
\begin{equation*}
\begin{split}
\mathcal{F}^{\rho}\left(\rho_{i+1/2}^{\vr,-},u_{i},\rho_{i+1/2}^{\vr,+},u_{i+1}\right)&=-\frac{1}{ \Delta x }\left( P(r_{i+1}) -P(r_{i}) + \chi\overline{r}_{i+1/2}(\phi_{i}-\phi_{i+1})\right) \\
&+O(\vr\Delta x)+O(\vr^2).
\end{split}
\end{equation*}

\textit{Case of the E-reconstruction. } We follow the same computations as for the P-reconstruction.
We estimate the two remaining  terms  $\ds \rho_{i}^\vr u_{i}$ and $\ds  \rho_{i+1/2}^{\vr,+}-\rho_{i+1/2}^{\vr,-}$ in the case of the E-reconstruction. 
Passing to the limit in the reconstruction \eqref{eq:Ereconstruction}  when $\vr \to 0$, we obtain
\begin{equation*}
\left\{\begin{array}{l}
\Psi\left(r_{i+1/2}^{-}\right)=\left[\Psi(r_{i}^n) + \chi(\min(\phi_{i}^n,\phi_{i+1}^n)-\phi_{i}^n)-(u_{i}^n)_{+}\Delta x\right]_{+},\\
\Psi\left(r_{i+1/2}^{+}\right)=\left[\Psi(r_{i+1}^n) + \chi(\min(\phi_{i}^n,\phi_{i+1}^n)-\phi_{i+1}^n)+(u_{i+1}^n)_{-}\Delta x\right]_{+}.
\end{array}\right.
\end{equation*}
and using that $r_{i+1/2}^{n,-}=r_{i+1/2}^{n,+}$, we obtain by subtracting the two previous equations and dropping the positive part:
\begin{equation}\label{DiffReconstrELimit}
0=\Psi(r_{i+1}) -\Psi(r_{i}) + \chi(\phi_{i}-\phi_{i+1})+ ((u_{i+1})_{-}+ (u_{i})_{+})\Delta x.
\end{equation}
By the same way, subtracting the two equations \eqref{eq:Ereconstruction} and dropping the positive part, we also obtain the following  equation~:
\begin{equation}\label{DiffReconstrE}
\begin{split}
\Psi\left(\rho_{i+1/2}^{\vr,+}\right)&-\Psi\left(\rho_{i+1/2}^{\vr,-}\right)=\Psi(\rho_{i+1}^\vr) -\Psi(\rho_{i}^\vr)
+ \chi(\phi_{i}-\phi_{i+1})\\
&+((u_{i+1})_{-}+(u_{i})_{+})\Delta x.
\end{split}
\end{equation}
Now, subtracting \eqref{DiffReconstrE} and \eqref{DiffReconstrELimit} gives~:
\begin{equation*}
\Psi\left(\rho_{i+1/2}^{\vr,+}\right)-\Psi\left(\rho_{i+1/2}^{\vr,-}\right)=(\Psi(\rho_{i+1}^\vr) -\Psi(\rho_{i}^\vr))-(\Psi(r_{i+1}) -\Psi(r_{i}) ).
\end{equation*}
Using the expansions  \eqref{eq:asExpansion}, it is straightforward that $\ds \rho_{i+1/2}^{\vr,+}-\rho_{i+1/2}^{\vr,-}=O(\vr\Delta x)+O(\vr^2)$. 

Finally, let us consider the last term $\ds \rho_{i}^\vr u_{i}$ of \eqref{Estimate}.
Using expansions \eqref{eq:asExpansion}, we decompose it as follows, in order to use relation \eqref{DiffReconstrELimit}~:
\begin{equation*}
\begin{split}
 \rho_{i}^\vr u_{i}&= \rho_{i}^\vr ((u_{i})_{+}+(u_{i})_{-})=\rho_{i}^\vr ((u_{i})_{+}+(u_{i+1})_{-})+ O(\vr \Delta x),\\
 &=r_{i} ((u_{i})_{+}+(u_{i+1})_{-})+O(\vr^2)+ O(\vr \Delta x)
 \end{split}
\end{equation*}
which gives
\begin{equation*}
 \rho_{i}^\vr u_{i}=-\frac{r_{i}}{ \Delta x }\left( \Psi(r_{i+1}) -\Psi(r_{i}) + \chi(\phi_{i}-\phi_{i+1})\right)
 +O(\vr^2)+ O(\vr \Delta x).
\end{equation*}
To conclude, all these estimates enable us to write equation \eqref{Estimate}  in the case of the E-reconstruction as~:
\begin{equation*}
\begin{split}
\mathcal{F}^{\rho}\left(\rho_{i+1/2}^{\vr,-},u_{i},\rho_{i+1/2}^{\vr,+},u_{i+1}\right)&=-\frac{r_{i}}{ \Delta x }\left( \Psi(r_{i+1}) -\Psi(r_{i}) + \chi(\phi_{i}-\phi_{i+1})\right)\\
&+O(\vr\Delta x)+O(\vr^2).
\end{split}
\end{equation*}
\end{proof}
Following \eqref{defPsi}, we can see that the second result of the theorem can be rewritten as~:
\begin{equation*}
\begin{split}
\mathcal{F}^{\rho}\left(\rho_{i+1/2}^{\vr,-},u_{i},\rho_{i+1/2}^{\vr,+},u_{i+1}\right)&=-\frac{r_{i}}{ \Delta x }\left( \frac{\kappa\gamma}{\gamma-1}(r_{i+1}^{\gamma-1}-r_{i}^{\gamma-1}) + \chi(\phi_{i}-\phi_{i+1})\right)\\
&+O(\vr\Delta x)+O(\vr^2).
\end{split}
\end{equation*}
in the case of the E-reconstruction and of the pressure law for isentropic gases \eqref{eq:pressure_law}. This means that,  using the E-reconstruction \eqref{eq:Ereconstruction},  we obtain in the  LTLD limit,  a numerical scheme for the parabolic-elliptic equation \eqref{eq:parabolic_elliptic} which is non-conservative and which has a wrong diffusion coefficient. On the other hand, the P-reconstruction  \eqref{eq:Preconstruction} used in the numerical scheme \eqref{eq:scheme_main} is consistent with the second order conservative scheme for the Keller-Segel type model  \eqref{eq:parabolic_elliptic}. In the following, we would therefore rather use the P-reconstruction in our numerical tests. 

\section{Numerical tests for the hyperbolic model}\label{NumHyp}

In this section we compare numerically the  methods  presented in the previous section. We show that the implicit treatment of the damping term gives a better approximation of the momentum and of the density near the vacuum states. Then, choosing the P-reconstruction and the implicit treatment of the damping, we analyze the dependence of the asymptotic numerical solutions on the mesh refinement .  

Let us observe that, in the following, we will display several plots of the residuals of the density with respect to time in a log-log scale. The residuals at discrete time $t^n=n\Delta t$ are defined by the difference of the density between time $t^{n+1}$ and time $t^{n}$, that is to say $\ds || \rho^{n+1}-\rho^n ||$, and the evolution of the residuals stops whenever we reach a stationary asymptotic solution.

To begin with, let us   compare implicit and explicit treatment of the damping term in the momentum balance equation. In what follows, we will use the Suliciu relaxation scheme adapted to vacuum  for the homogeneous part of the hyperbolic system for $\rho$  in  \eqref{eq:main_system} and the parabolic equation  for $\phi$ of system \eqref{eq:main_system} is treated thanks to a classical centered discretization   in space and a Crank-Nicolson scheme  in time.

\subsection{Implicit vs. explicit approximations of the damping term}
In section \ref{Reconstruct}, we proposed a partially implicit version of the well-balanced scheme, see equation \eqref{eq:implicit},  by treating in a different way the linear stiff damping term of the  momentum equation of system \eqref{eq:main_system}. Indeed  in the well-balanced scheme presented in \cite{NRT}, we treated this term  explicitly inside the new reconstructed variables. In this test,  we study the effect of this implicit treatment. 

We consider  system \eqref{eq:main_system} on an interval  of length $L=1$  with the following parameters :  $\gamma=2$ or $\gamma=3$,  $\chi=50$, $D=a=b=\alpha=\kappa=1$. We take as initial datum  the   density defined as $\rho_{0}(x)=1+\sin{(4\pi|x-0.25L|)}$. Figure~\ref{fig:test1_gamma2} (respectively  Figure~\ref{fig:test1_gamma3}) displays  the  solution to  system  \eqref{eq:main_system}   for $\gamma=2$ (resp. $\gamma=3$) at time $T=300$  obtained with the various well-balanced schemes presented in section  \ref{Reconstruct}. More precisely, the figures on the top show the density $\rho$  and the  concentration $\phi$, whereas the subfigures on the bottom show the logarithm of the momentum, $\log|\rho u |$ as a function of the space variable $x$. On the left, the P-reconstruction \eqref{eq:Preconstruction}  or   \eqref{eq:Preconstruction_implicit} is used, whereas on the right, the reconstruction is given by the  E-reconstruction  \eqref{eq:Ereconstruction}  or   \eqref{eq:Ereconstruction_implicit}. Finally, on each subfigure, the implicit treatment of the damping term, given by scheme \eqref{eq:implicit}   is compared with its explicit integration, that is to say using the schemes \eqref{eq:scheme_main}-\eqref{eq:Preconstruction}-\eqref{eq:Ereconstruction} .
\begin{figure}[htbp!]
\centering
\begin{tabular}{cc}
\includegraphics[scale=0.1]{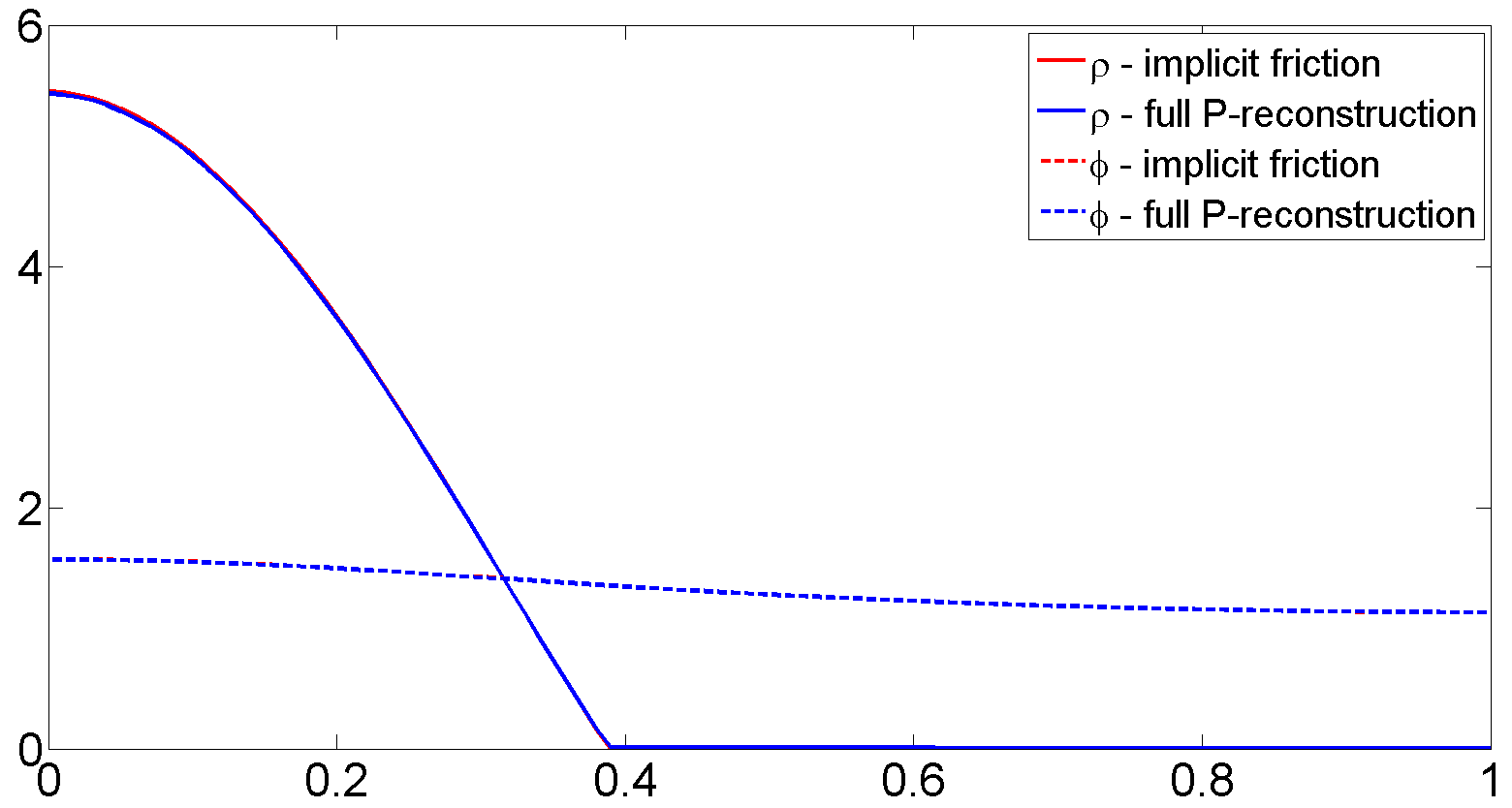}&\includegraphics[scale=0.1]{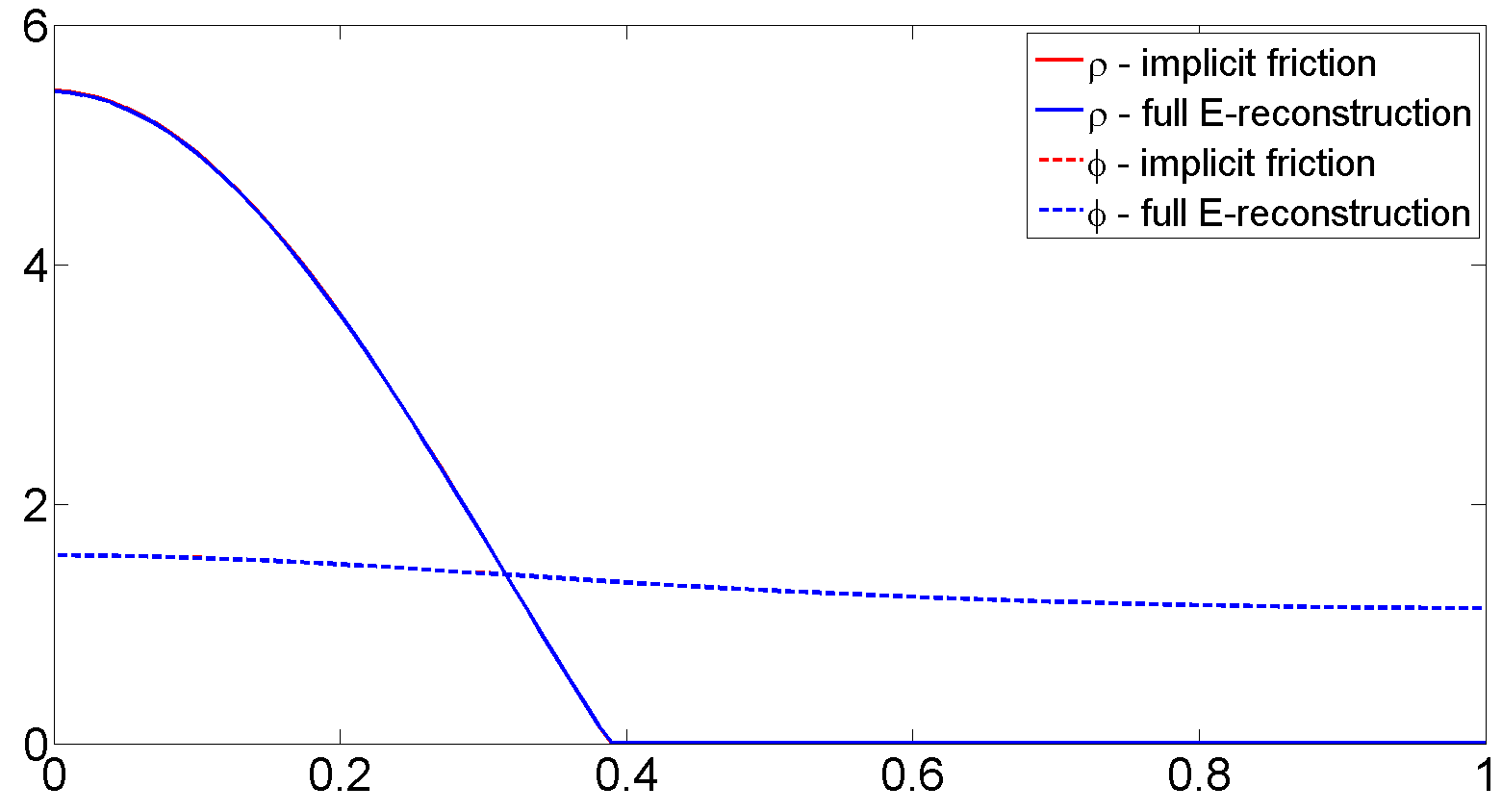}\\
P-reconstruction: $\rho,\phi$ & E-reconstruction: $\rho,\phi$\\
\includegraphics[scale=0.1]{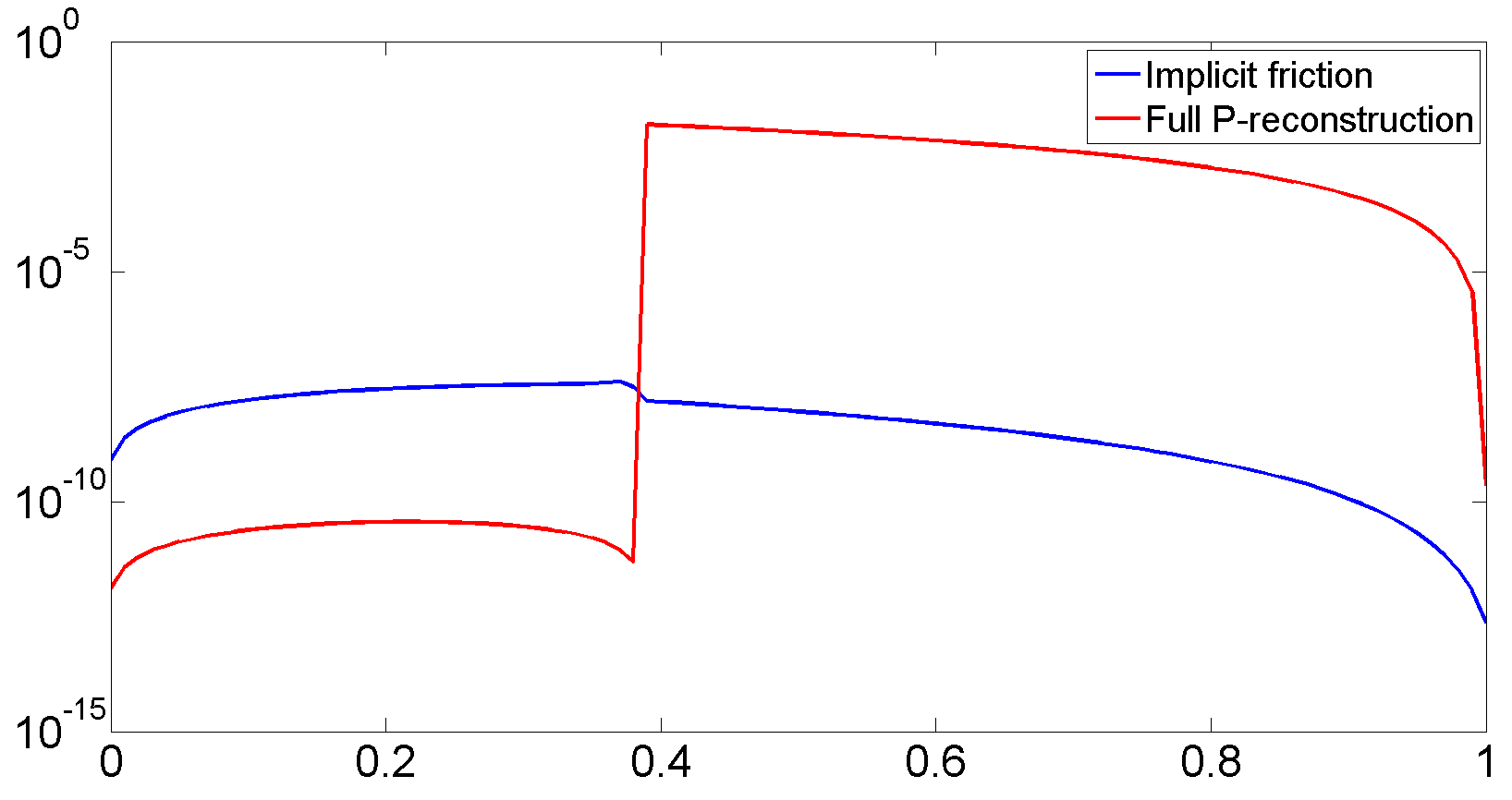}&\includegraphics[scale=0.1]{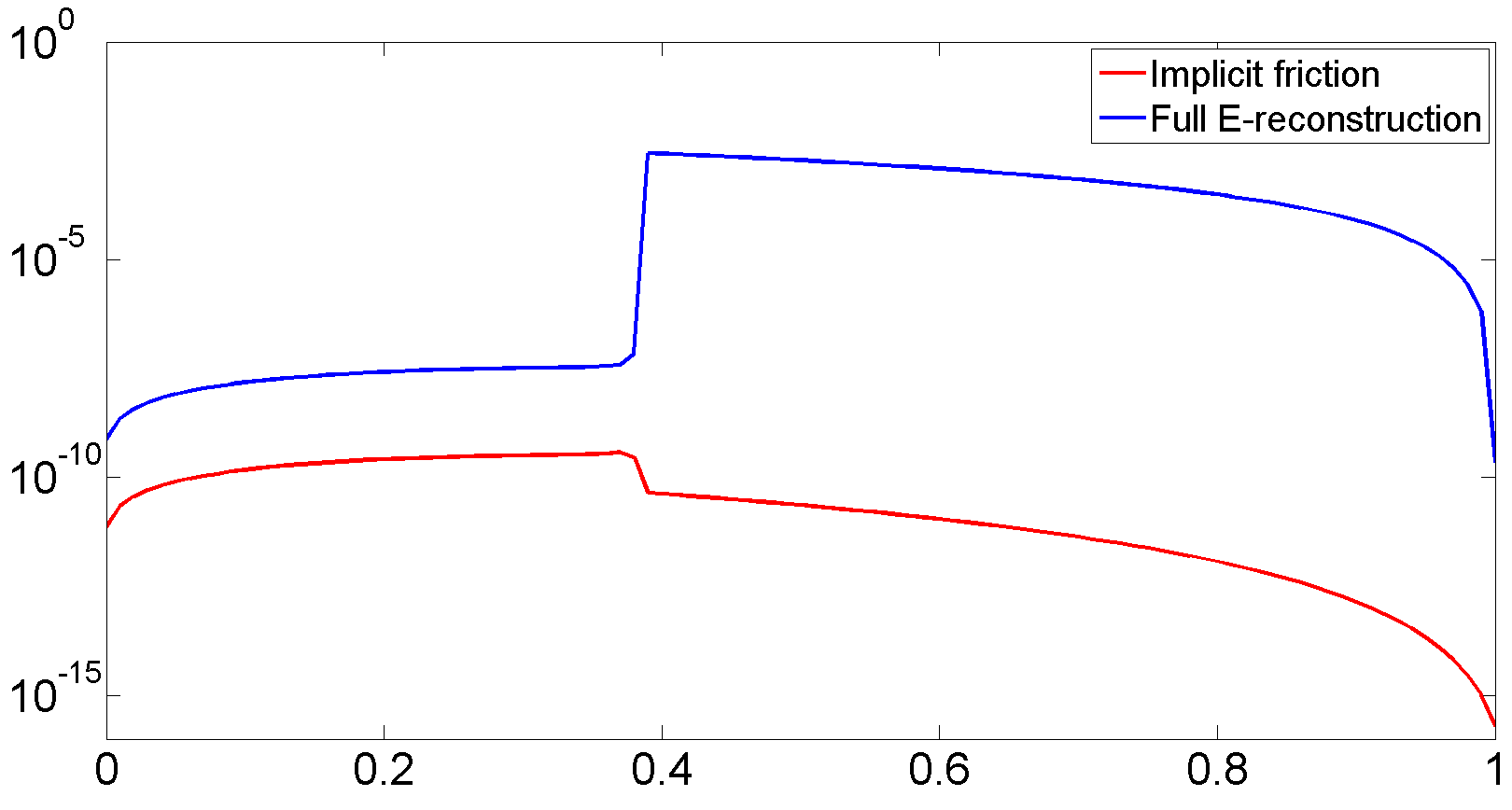}\\
P-reconstruction: $\log|\rho u|$ & E-reconstruction: $\log|\rho u|$
\end{tabular}
\caption{Case of the lateral bump with  $\gamma=2$: Density and concentration distributions (on top) and the logarithm of the momentum (on bottom) for the model \eqref{eq:main_system} with  $\chi=50$, $D=a=b=\alpha=\kappa=1$ and $\gamma=2$. Comparison between the P-reconstruction (on the left) and the E-reconstruction (on the right). On each subfigure,  the implicit treatment of the damping term is compared with the explicit approximation.}
\label{fig:test1_gamma2}
\end{figure}
\begin{figure}[htbp!]
\begin{center}
\begin{tabular}{cc}
\includegraphics[scale=0.1]{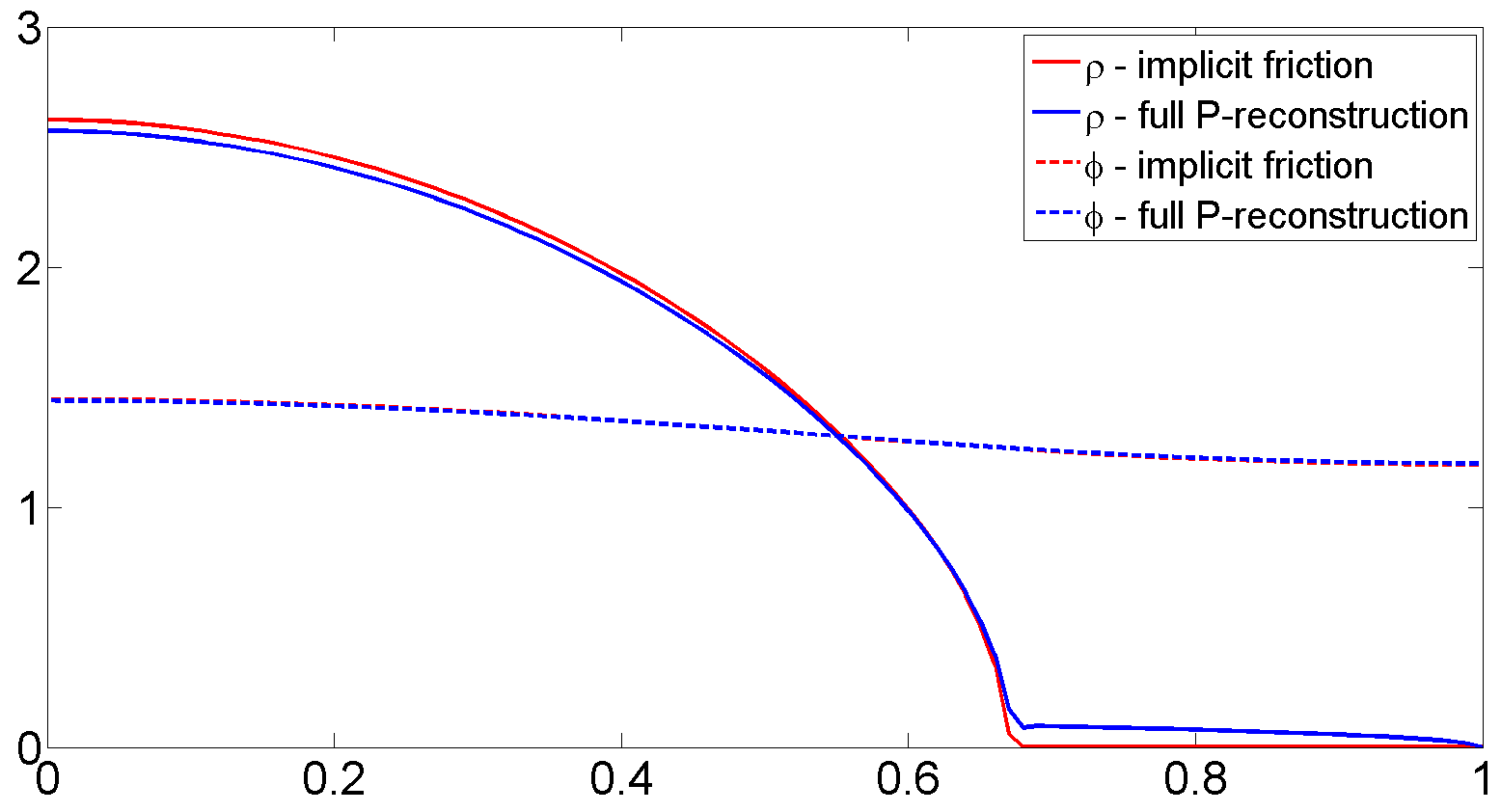}&\includegraphics[scale=0.1]{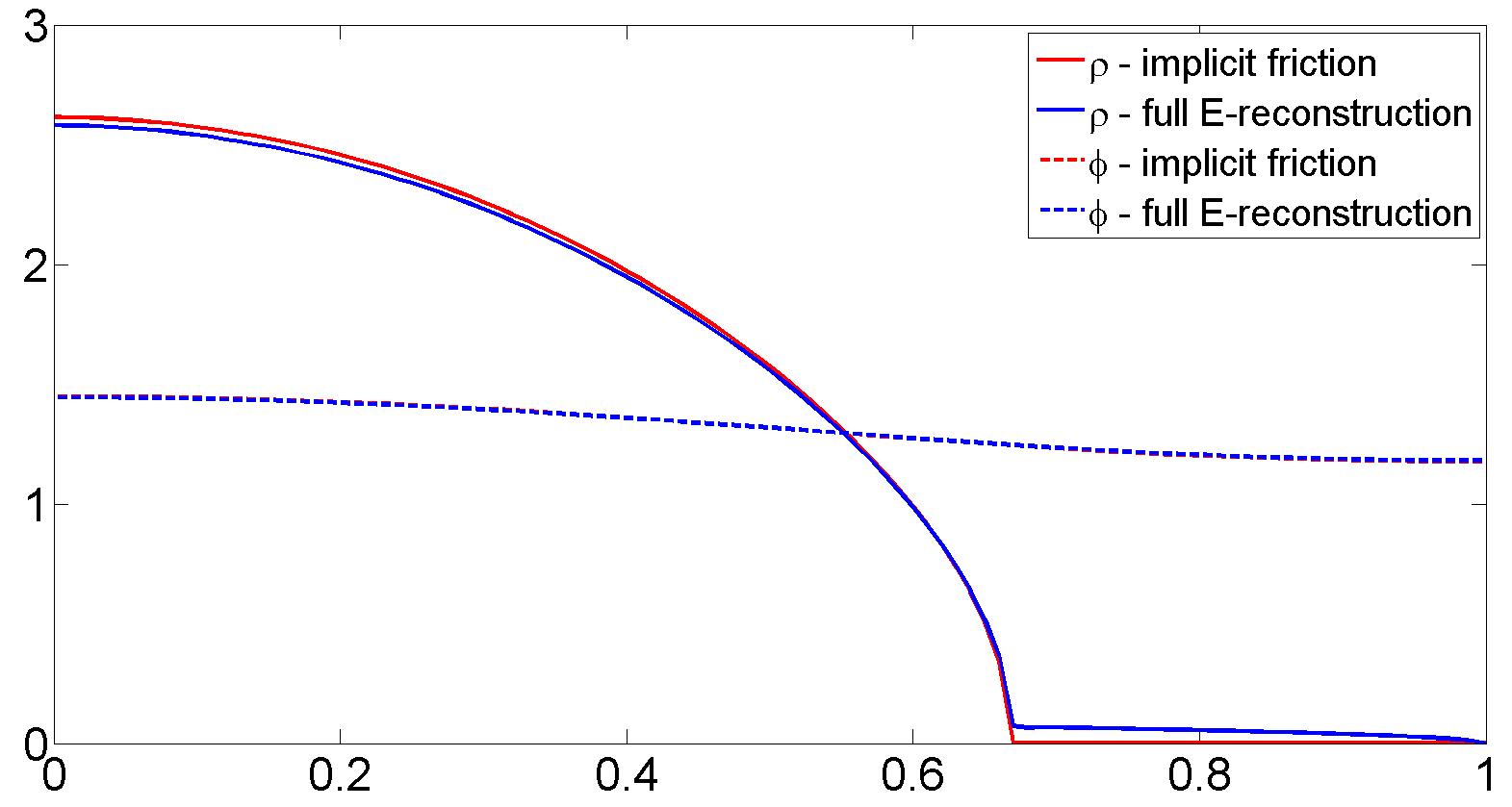}\\
P-reconstruction: $\rho,\phi$ & E-reconstruction: $\rho,\phi$\\
\includegraphics[scale=0.1]{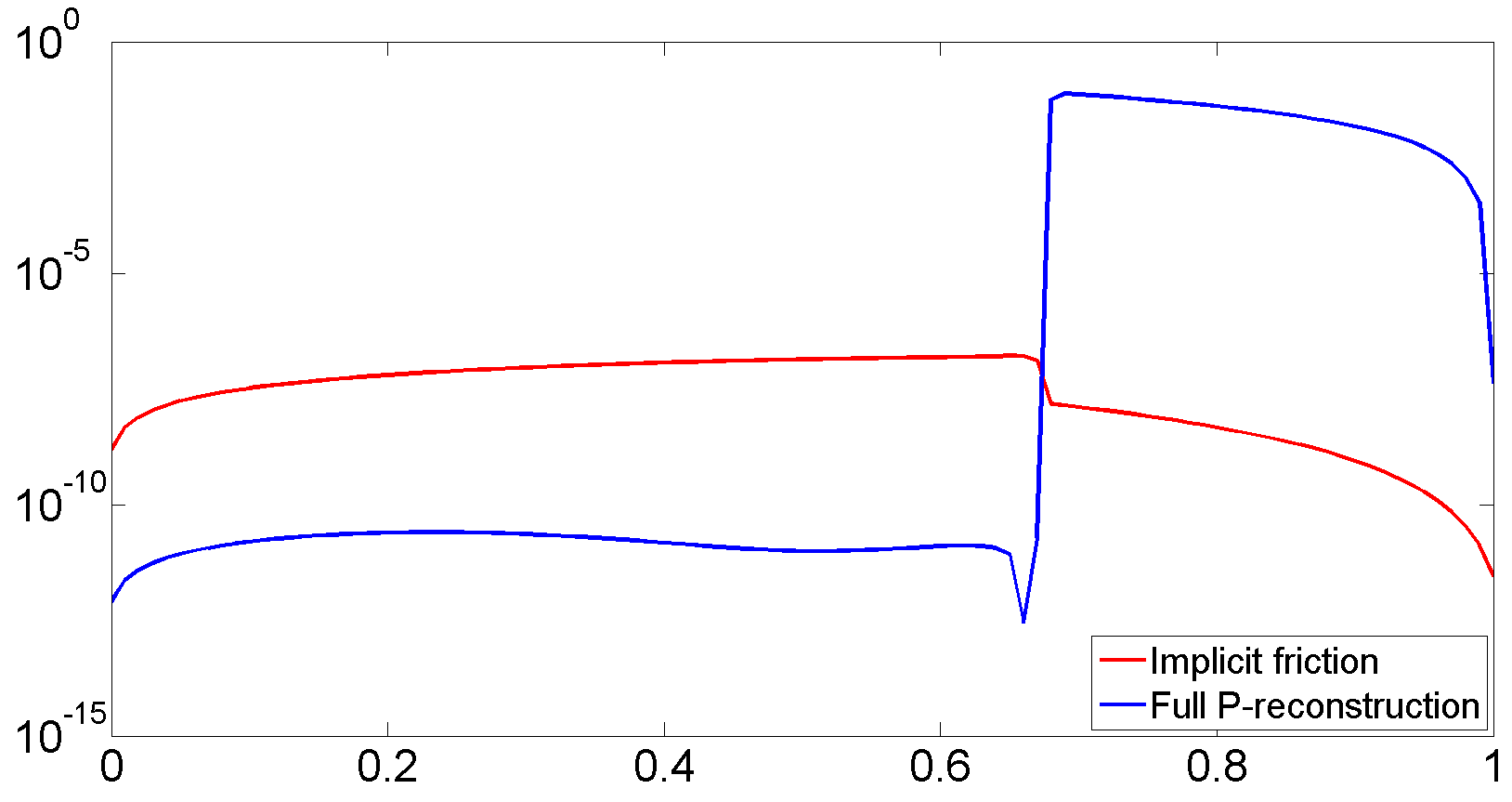}&\includegraphics[scale=0.1]{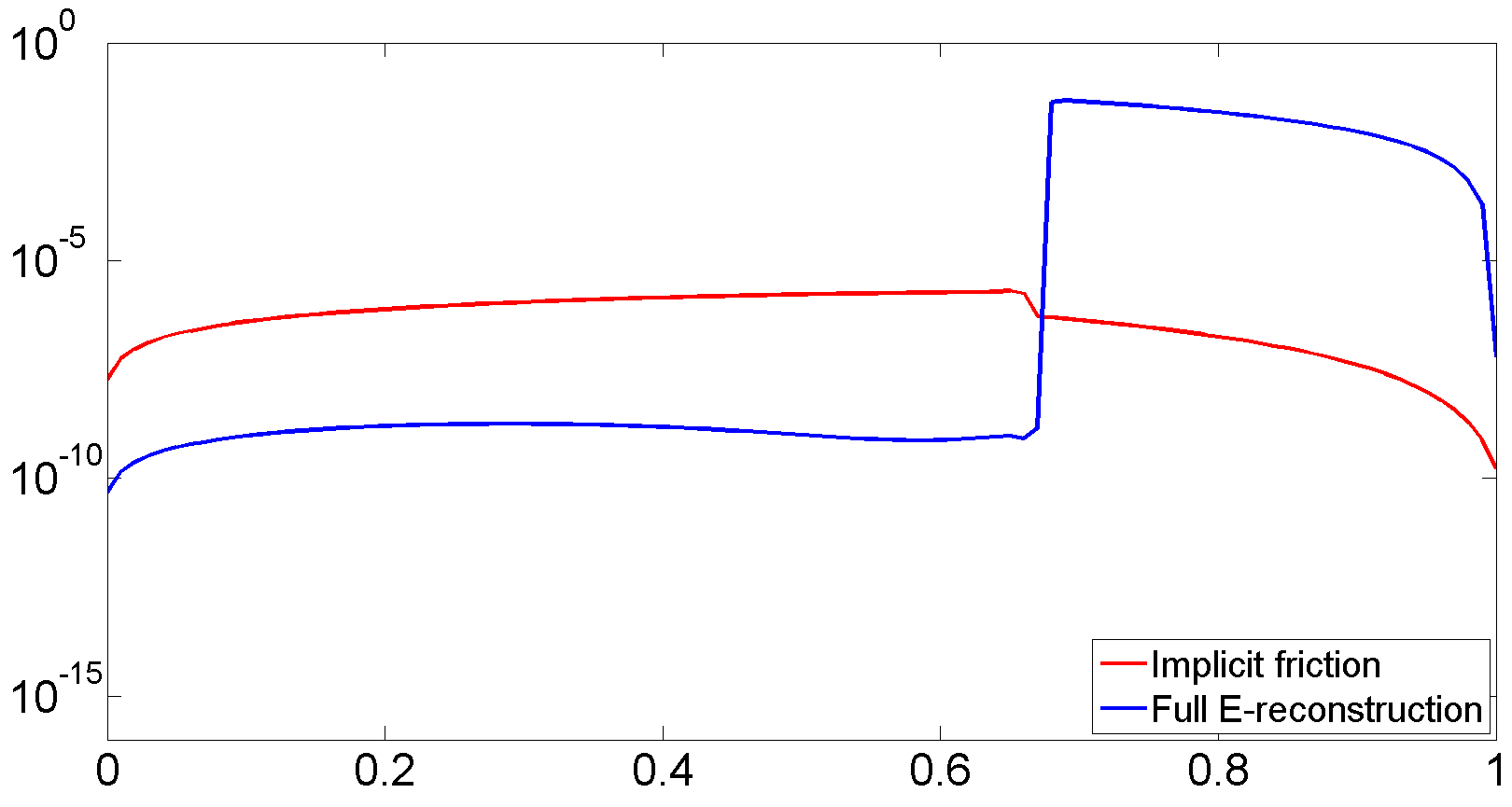}\\
P-reconstruction: $\log|\rho u|$ & E-reconstruction: $\log|\rho u|$
\end{tabular}
\end{center}
\caption{Case of the lateral bump with $\gamma=3$: Density and concentration distributions (on top) and the logarithm  of the momentum (on bottom) for the model \eqref{eq:main_system} with  $\chi=50$, $D=a=b=\alpha=\kappa=1$ and $\gamma=3$. Comparison between the P-reconstruction (on the left) and the  E-reconstruction (on the right). On each subfigures,  the implicit treatment of the damping term is compared with the explicit approximation.}
\label{fig:test1_gamma3}
\end{figure}

We notice that the two reconstruction methods give comparable results; however, there is a large improvement  when the damping term is treated implicitly. It is especially visible for the momentum function, which should vanish asymptotically. In particular, it gives better accuracy  in the region where the density vanishes. For the explicit approximation,  in the case $\gamma=3$,   the density and the momentum are far from being zero at the theoretical vacuum states. We remark that in this case, the implicit approximation reduces the $L^{\infty}$ error of the momentum of an  order $10^6-10^8$. 

In  subsection \ref{AP}, we have  shown that only the P-reconstruction is asymptotically consistent with a numerical scheme for the correct limit parabolic model. This is the reason why, from now on, we will use the finite volume scheme \eqref{eq:scheme_main} with the P-reconstruction \eqref{eq:Preconstruction_implicit} and the implicit treatment of the damping term.

\subsection{Mesh dependence}

In computational fluid dynamics (CFD) it is well known that characteristic phenomena of the flow appear at different length scales, and so we have to study the influence of grid refinement on the numerical approximation of solutions. Unlike standard fluid dynamics problems, for which the areas where small scale phenomena may occur are known and static,  or their evolution is known, so that adaptive mesh refinement can be applied, for model \eqref{eq:main_system} of chemotaxis,  the situation is more complicated since we do not have any information on the precise location of the critical regions, especially  vacuum regions. In order to compare correctly the asymptotic behavior of the hyperbolic and parabolic models of chemotaxis in section \ref{NumComp}, we analyze first  the effect of the mesh refinement  on the solutions to the hyperbolic model using the iterative refining process. At  first step, we run the simulation on a coarse mesh up to the convergence, that is until a steady solution with vanishing velocity is reached. Then, we refine the mesh uniformly and run the simulation again. If the successive result bears sufficient similarities, the iterative refinement is stopped and the first mesh can be considered as an accurate one. Otherwise, we repeat the mesh refinement until the benefit gained by using the finest mesh is no more significant. 


As  mentioned before, we use the implicit scheme  \eqref{eq:implicit} with the P-reconstruction \eqref{eq:Preconstruction_implicit}. It is difficult to choose a good mesh-dependence test that would reflect all the possible behaviors of the model. This is why we analyze the case which will be used as a  comparison between the parabolic and the hyperbolic models.  We consider  system \eqref{eq:main_system} on an interval of length $L=3$  with the following parameters  $\gamma=2$ or $\gamma=3$, $\chi=10$, $D=0.1$, $a=20$, $b=10$, $\alpha=\kappa=1$ . We take  as an initial datum,  the density $\rho_{0}(x)=1.5+\sin{(4\pi|x-0.25L|)}$ and the initial concentration $\phi=0$. Figure~\ref{fig:meshRefinement_gamma2} (resp. \ref{fig:meshRefinement_gamma3}) shows, on the left, the density as a function of space, using different  space steps in the case $\gamma=2$ (resp. $\gamma=3$). On the right, we display  the evolution of the residuals of the density with respect to time using  a log-log scale.  
\begin{figure}[htbp!]
\begin{center}
\begin{tabular}{cc}
\includegraphics[scale=0.1]{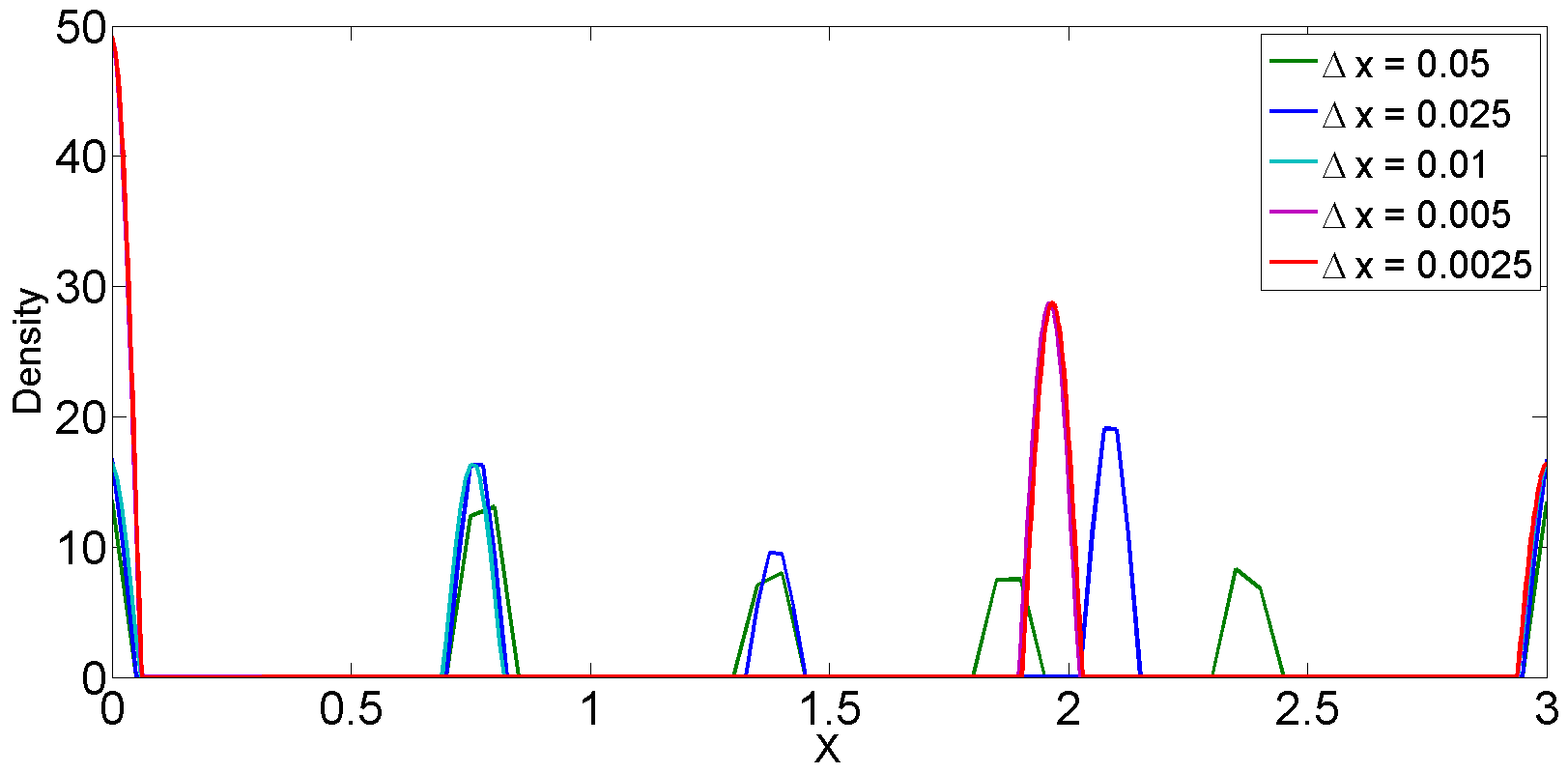}&\includegraphics[scale=0.1]{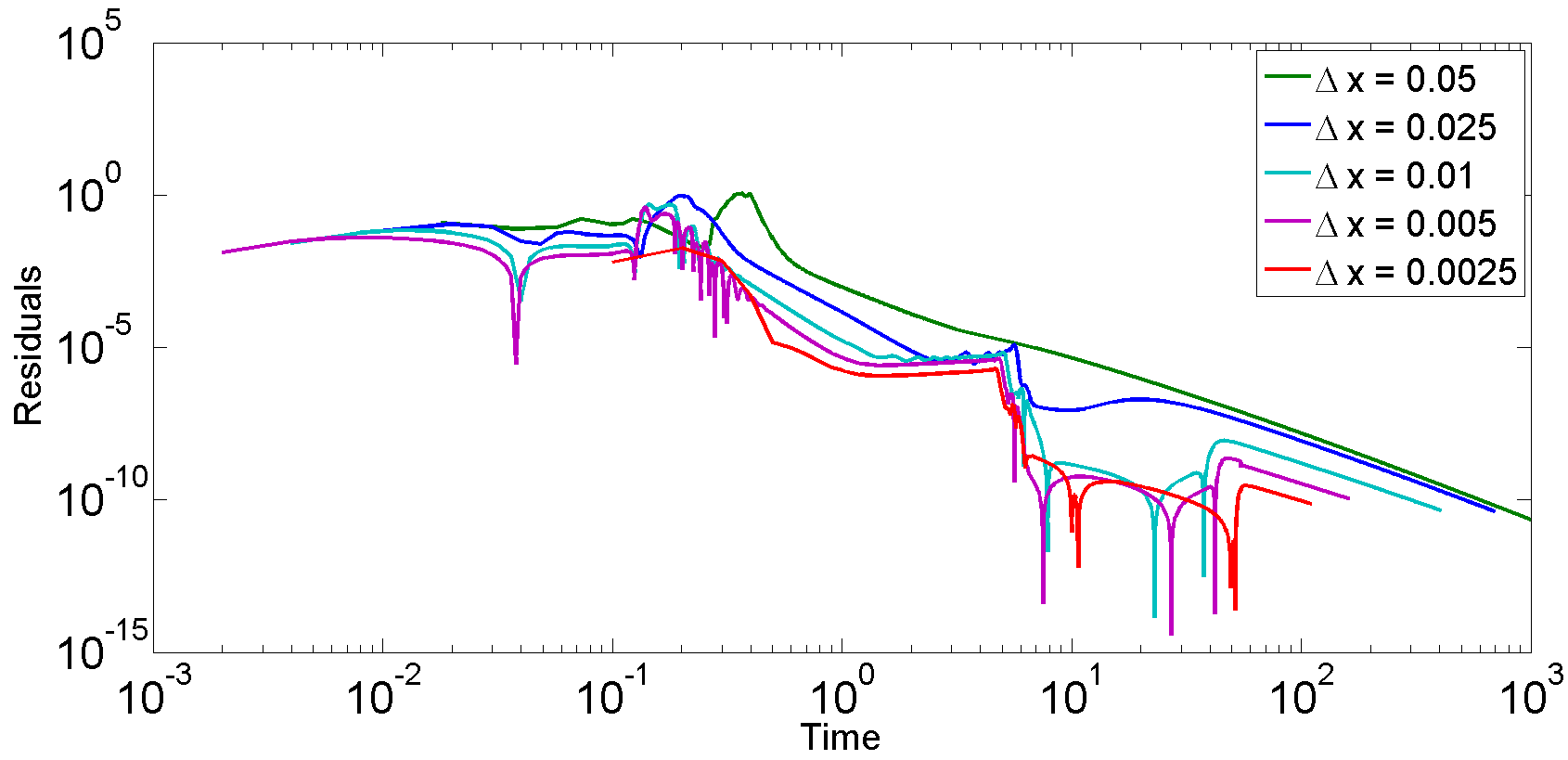}\\
\end{tabular}
\end{center}
\caption{Mesh refinement in the case $\gamma=2$. Density distribution  as a function of space (on the left) and residuals of the density  as a function of time in a log-log scale  (on the right) for the model \eqref{eq:main_system} with $\chi=10$, $D=0.1$, $a=20$, $b=10$, $\alpha=\kappa=1$ and $\gamma=2$, approximated using the P-reconstruction with the implicit treatment of the damping term. We use  different space steps $\Delta x = \{5 \times 10^{-2},2.5 \times 10^{-2}, 10^{-2}, 5 \times 10^{-3},2.5 \times 10^{-3}\}$.}
\label{fig:meshRefinement_gamma2}
\end{figure}
\begin{figure}[htbp!]
\begin{center}
\begin{tabular}{cc}
\includegraphics[scale=0.1]{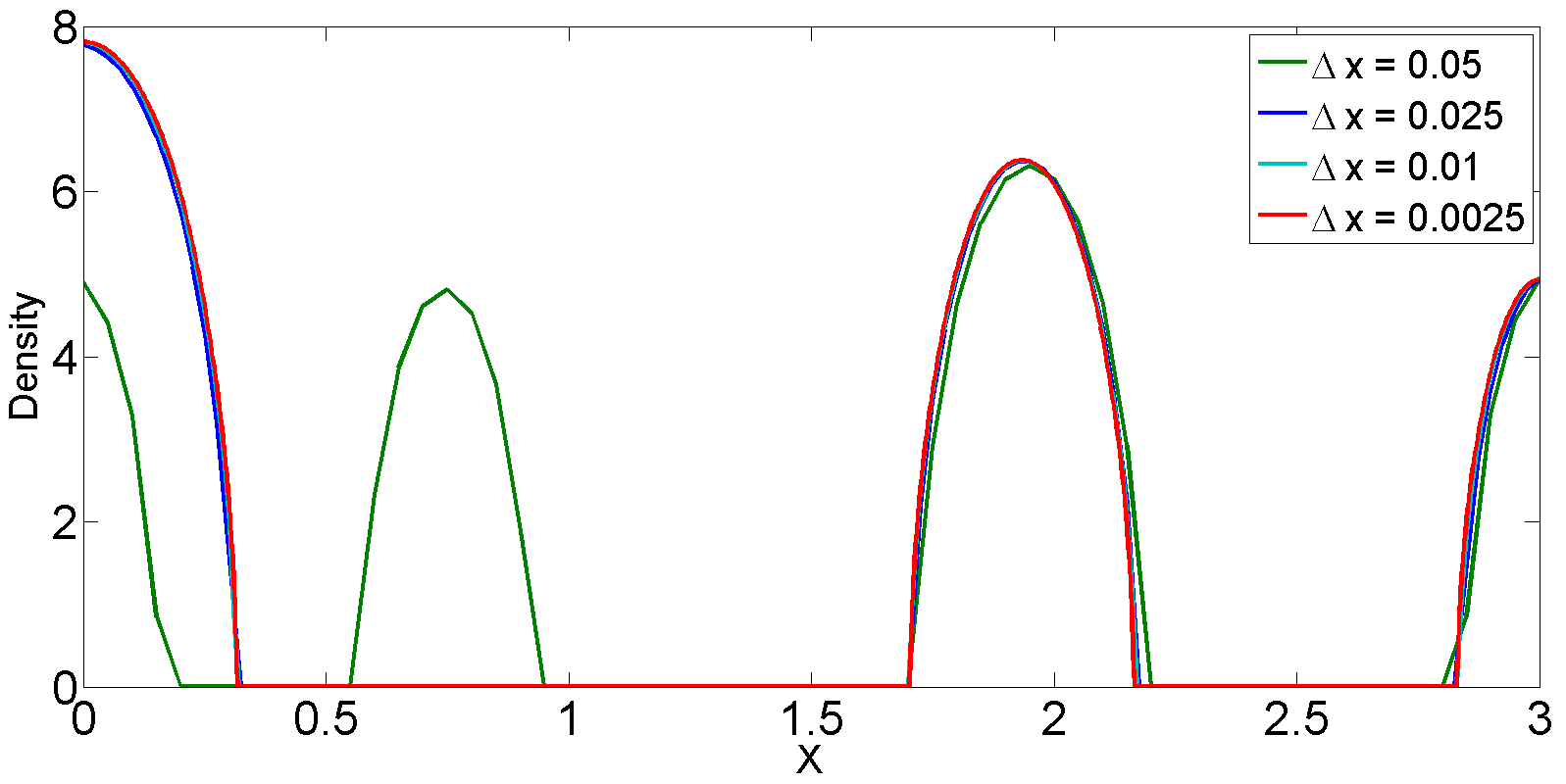}&\includegraphics[scale=0.1]{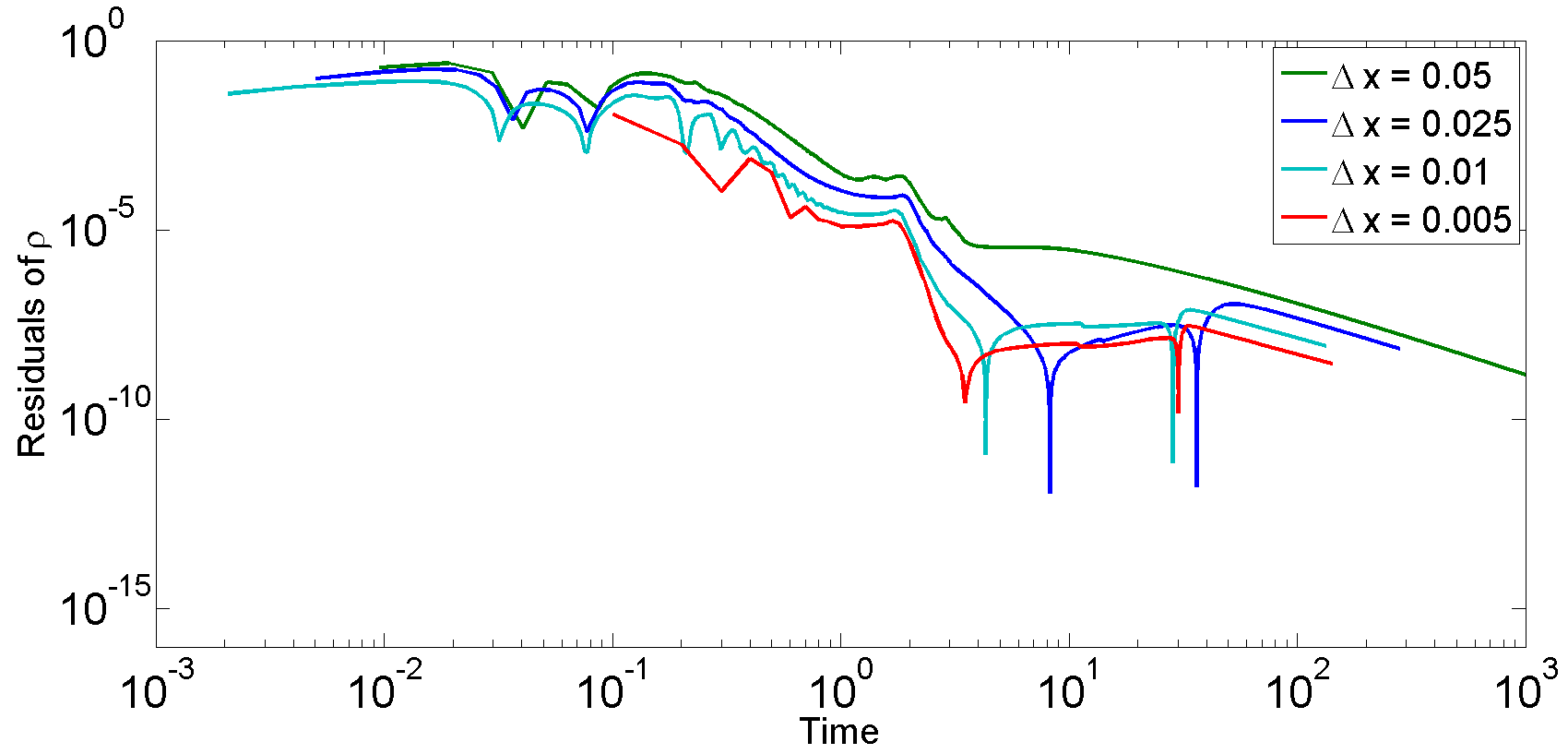}\\
\end{tabular}
\end{center}
\caption{Mesh refinement in the case $\gamma=3$. Density distribution  as a function of space  (on the left) and residuals of the density  as a function of time in a log-log scale  (on the right) for the model \eqref{eq:main_system} with  $\chi=10$, $D=0.1$, $a=20$, $b=10$, $\alpha=\kappa=1$ and $\gamma=3$, approximated using the P-reconstruction with the implicit treatment of the damping term. We use  different space steps $\Delta x = \{5 \times 10^{-2},2.5 \times 10^{-2}, 10^{-2}, 5 \times 10^{-3}\}$.}
\label{fig:meshRefinement_gamma3}
\end{figure}

We observe that for $\gamma=2$, the asymptotic solutions change significantly with the mesh refinement. More precisely, when we decrease the space step $\Delta x$, some of the neighboring bumps  merge together and the total number of regions with positive density becomes smaller. Finally the only correct solution appears to be the one for $\Delta x = 5 \times 10^{-3}$, which is stable under further refinements of the mesh. In the case $\gamma=3$,  the dependence on the mesh size is weaker and the difference is visible only for a very coarse mesh $\Delta x=5  \times 10^{-2}$. Below $\Delta x=2.5 \times 10^{-2}$, the asymptotic state does not change with the mesh refinement. Moreover, we see that the structure of the solution for the mesh of size $\Delta x=5 \times 10^{-3}$ is the same for $\gamma=2$ and $\gamma=3$,  that is  to say three bumps.

\section{Numerical comparison of the asymptotic behavior of the solutions to the quasilinear hyperbolic   and  the degenerate parabolic systems}\label{NumComp}

As explained before, on bounded domains with no-flux boundary conditions, stationary solutions for quasilinear hyperbolic system \eqref{eq:main_system} and parabolic system \eqref{eq:KellerSegel} are the same. A complete description of such stationary solutions has been recalled in Section \ref{recall}. Since we observed in \cite{NRT} that some of these stationary solutions with several bumps are asymptotic states of system  \eqref{eq:main_system}, we expect these solutions to be also  asymptotic solutions of the parabolic  model \eqref{eq:KellerSegel}. This guess  is motivated by the convergence of the solutions to the hyperbolic problem towards a parabolic-elliptic model in the LTLD limit;  see \cite{Marcati} for the model \eqref{eq:main_system} without chemotaxis  on unbounded domain and  \cite{DiFrancesco_Donatelli} for the full model, still  on an unbounded domain.

Therefore, in this section, our goal is  to compare carefully the large time behavior of these two models. First, we analyze the asymptotic solutions for initial data given by stationary solutions, which are  computed explicitly in the case $\gamma=2$. Notice that, due to numerical errors, the solutions are no longer stationary. In the case of  a non-symmetric stationary solution  composed of two lateral half bumps with different masses, we find that the asymptotic solutions of the two models are different. In the same way, for $\gamma=3$, we choose a generic initial datum  and we give examples of parameters for which the two systems stabilize asymptotically on different solutions; also  in that case, we show evidences of the appearance and  disappearance of some metastable patterns. 

Before the analysis of the asymptotic behavior of the solutions,  we briefly present an accurate scheme for the parabolic model \eqref{eq:KellerSegel} based on a relaxation technique.


\subsection{Numerical discretization of  degenerate parabolic model}\label{KSnum}

Now we describe the numerical scheme we use for the Keller-Segel type model \eqref{eq:KellerSegel}. As in the case of the hyperbolic model, the linear reaction-diffusion equation for the chemical concentration $\phi$  is solved using the second order centered finite differences method in space and a classical explicit-implicit Crank-Nicholson integration in time. At this point,  we focus only on the equation for the time evolution of the density 
\begin{equation}\label{eq:fv_chemotaxis}
\rho_{t}=\left(P(\rho)_{x}-\chi(\rho)\phi_{x}\right)_{x},
\end{equation}
where $\chi(\rho) = \chi\rho$. 

Notice that, due to the presence of vacuum,  the classical  second order centered explicit scheme
\begin{eqnarray*}
\rho_{i}^{n+1}&=&\rho_{i}^{n}+\frac{\Delta t}{\Delta x^2}\left(P_{i+1}^{n}-2P_{i}^{n}+P_{i-1}^{n}\right)\\
\nonumber&&\qquad-\frac{\chi\Delta t}{2\Delta x^2}\left[(\rho_{i}^{n}+\rho_{i+1}^{n})(\phi_{i+1}^{n}-\phi_{i}^{n}) -(\rho_{i-1}^{n}+\rho_{i}^{n})(\phi_{i}^{n}-\phi_{i-1}^{n})\right],
\end{eqnarray*}
where $\ds P_{i}^{n}=P(\rho_{i}^{n})$, fails to preserve the non negativity of the density \cite{BOP}. This is the reason why we consider the relaxation technique, based on the diffusive BGK approximation, introduced in \cite{ANT}. The advantage of this approach lies in a suitable modification of the diffusion term, split  into the linear and nonlinear parts, which guarantees the stability.  

Using the BGK approximation with two velocities  $\lambda_{1}=-\lambda_{2}=\lambda$, we are reduced to discretize linear transport problems, which is done thanks to  upwind method, see \cite{ANT} for more details, and we obtain the following scheme in the finite volume framework
\begin{equation*}
\rho_{i}^{n+1}=\rho_{i}^{n}+\frac{\Delta t}{\Delta x}\left(\mathcal{F}_{i+1/2}^n-\mathcal{F}^n_{i-1/2}\right),
\end{equation*}
with
\begin{equation*}
\mathcal{F}_{i+1/2}^{n} = \mathcal{F}_{i+1/2}^{n,\textrm{d1}}+\mathcal{F}_{i+1/2}^{n,\textrm{d2}}+\mathcal{F}_{i+1/2}^{n,\textrm{c}}.
\end{equation*}
Here the numerical flux is decomposed in three distinct parts, a nonlinear diffusive part with pressure, a linear diffusive part on the density and an advection part, taking into account the chemotactic term~:
\begin{equation*}
\left\{\begin{array}{l}
\ds{\mathcal{F}_{i+1/2}^{n,\textrm{d1}} = \mathcal{F}^{\textrm{diff1}}(P_{i}^n,P_{i+1}^n) = \left(\frac{1}{\sqrt{2}\Delta x}-\frac{\lambda}{2\theta^2}\right) \left(P_{i+1}^n-P_{i}^n\right)},\\
\ds{\mathcal{F}_{i+1/2}^{n,\textrm{d2}} = \mathcal{F}^{\textrm{diff2}}(\rho_{i}^n,\rho_{i+1}^n) = \frac{\lambda}{2}\left(\rho_{i+1}^n-\rho_{i}^n\right)},\\
\ds{\mathcal{F}_{i+1/2}^{n,\textrm{c}} = \mathcal{F}^{\textrm{chem}}(A_{i}^n,A_{i+1}^n) = \frac{1}{2}\left(A_{i}^n+A_{i+1}^n\right)},
\end{array}\right.
\end{equation*}
where $\ds  A_{i}^n = \chi\rho_{i}^n\frac{\phi_{i+1}^n-\phi_{i-1}^n}{2\Delta x}$ is a discretization of the term $\ds \chi\rho\phi_{x}$.
The following choice of the two remaining  parameters  $\theta$ and $\lambda$, 
\begin{equation*}
\theta=\max_{\rho}\sqrt{\frac{P'(\rho)}{1-\beta}},\qquad \lambda=\frac{\chi\max_{\phi}\phi_{x}}{\beta},
\end{equation*}
with $\beta \in ]0,1[$, guarantees the monotonicity of  the BGK approximation, while the stability is ensured by the following  CFL condition (see again \cite{ANT} for the details of the proofs):
\begin{equation*}
\Delta t\leq\min\left\{\frac{\Delta x}{\lambda},\frac{\Delta  x^2}{2\theta^2}\right\}.
\end{equation*}
In practice, we will take $\beta=0.95$.
 The splitting of the diffusion term into a linear part and a nonlinear part  gives  the consistency for $\Delta x,\Delta t\rightarrow 0$ and the presence of the artificial viscosity  preserves the non negativity of the density at vacuum.

In the following sections, we compare the asymptotic behavior of the quasilinear hyperbolic model \eqref{eq:main_system} and the  asymptotic behavior of the parabolic Keller-Segel model \eqref{eq:KellerSegel} using the previously described numerical schemes.

\subsection{Case of two non-symmetric bumps as  initial datum}\label{NonSym}
We consider the interval $L=4$ and an initial datum composed of two lateral bumps with different masses. The solution is constructed  using the formulas of Section~\ref{sec:lateral_bump} with L=1 for each bump and $M=1$ for the left bump and $M=3$ for the right bump. The length of the support of each bump is the same since  it is independent of  the mass $M$. In Figure~\ref{fig:twoExactBumpsResiduals}, we display the asymptotic solutions at time $t=150$ for the hyperbolic and parabolic models (on the left) and  the residuals of the density as a function of time in a log-log scale (on the right). For the hyperbolic model, the initial density  is stable, while in the case of the parabolic model,  the smallest bump  merges with the largest bump around time $t=10$. 
\begin{figure}[htbp!]
\begin{center}
\begin{tabular}{cc}
\includegraphics[scale=0.1]{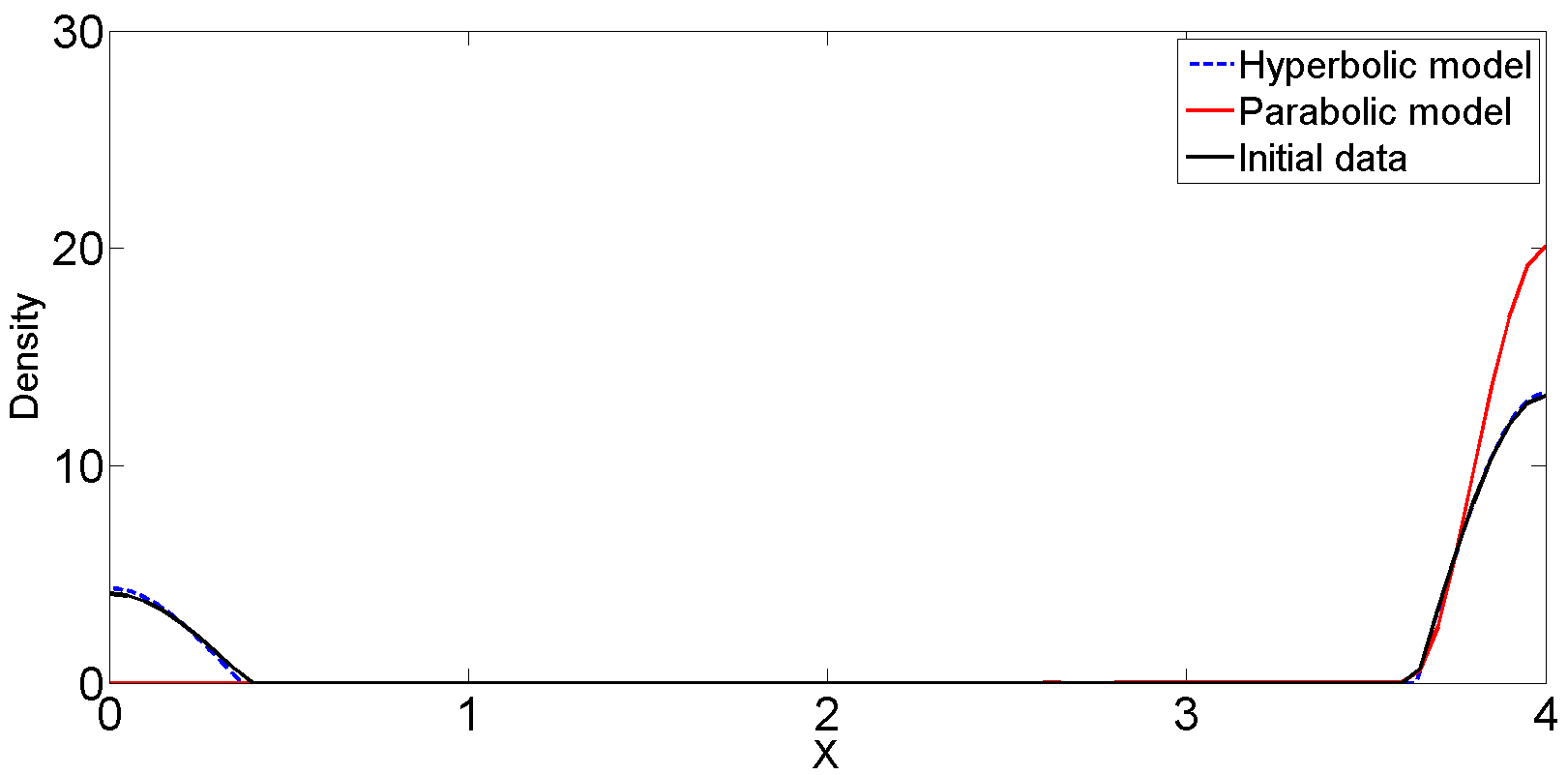}&
\includegraphics[scale=0.1]{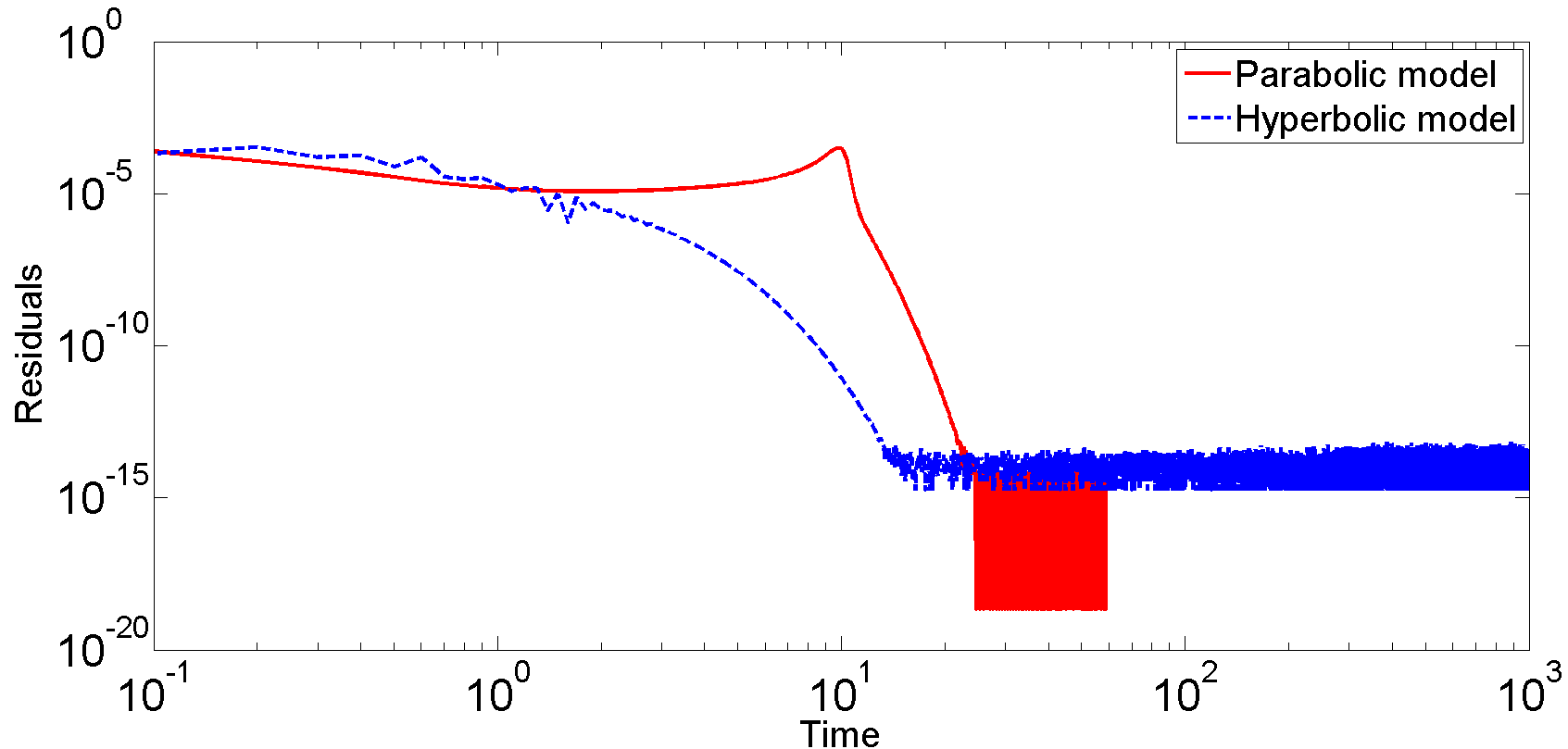}
\end{tabular}
\end{center}
\caption{Density distributions as a function of space at the asymptotic states (on the left) and the residuals of the density as a function of time in the log-log scale (on the right) for the quasilinear hyperbolic model \eqref{eq:main_system} and the parabolic Keller-Segel system \eqref{eq:KellerSegel} with $\gamma=2$, $\chi=50$, $D=a=b=\alpha=\kappa=1$ and the exact, non-symmetric steady solution composed of two lateral bumps with different masses as initial datum.}
\label{fig:twoExactBumpsResiduals}
\end{figure}

Remark that, in that case, we exhibit a set of parameters and initial data for which the two models, hyperbolic and parabolic, have different asymptotic behaviors. Namely, in that case, the diffusivity of the parabolic model does not permit to keep distinct bumps and leads to the emergence of a single bump structure.

\subsection{Generic initial data}\label{generic}

In the previous section, we observed a case of  different asymptotic states for the hyperbolic and parabolic models, that is to say asymptotic solutions composed of a different number of bumps. Now,  we compare the behavior of the two systems for a generic initial datum.  More precisely, we consider an interval of length $L=3$ with some parameters $\chi=10$, $D=0.1$, $a=20$, $b=10$, $\kappa=\alpha=1$ and $\gamma=3$; we take as an initial density the function $\rho_{0}(x)=1.5+\sin(4\pi|x-0.25L|)$ and the initial condition for the concentration $\phi$ is $\phi_{0}=0$. In Section~\ref{NumHyp},  we studied the dependence of the solutions of the hyperbolic model on the mesh refinement. We observed a high sensitivity to the space step  in the case $\gamma=2$. That is why, to avoid possible inaccurate results, we consider now only the case  $\gamma=3$, for which the acceptable grid size is much larger. In the following test, we perform simulations on a  mesh with space step $\Delta x=0.01$. The solution of the hyperbolic system was also verified on a finer mesh with $\Delta x=0.005$ and the density distribution did not change.

In Figure~\ref{fig:test2_gamma3}, we present the density  at different times for the quasilinear hyperbolic model and the parabolic system. In the case of the parabolic model,  we observe some  metastable patterns. This phenomenon has been already observed, but without vacuum, for the Keller-Segel type model with linear diffusion and logistic chemosensitive function, see \cite{HP2001}. For the parabolic model, we start with a smooth perturbation of an initial constant  solution, and a ''comb'' structure with several bumps appears and remains almost unchanged for a while, that is  to say between $t\sim2$ and $t\sim10$. Then,  a fast transition takes place and one of the interior bumps moves towards the boundary. This structure seems again to be frozen up to $t\sim300$ when another transition occurs and only one bump remains. Analyzing the residuals of the density in Figure~\ref{fig:test2_gamma3_residuals},  we note that the time between the subsequent transitions becomes larger as the number of bumps decreases. 

Moreover, the initial evolution of the hyperbolic model is much faster and the  interactions between bumps and the merging of some of them end between time $t=10$ and $t=20$.  In the end, one bump and two half  bumps remain and the solution stabilizes, which is confirmed by the residuals of the density.  Again, the hyperbolic and the parabolic asymptotic behaviors are different. 
\begin{figure}[htbp!]
\begin{center}
\begin{tabular}{cc}
\includegraphics[scale=0.1]{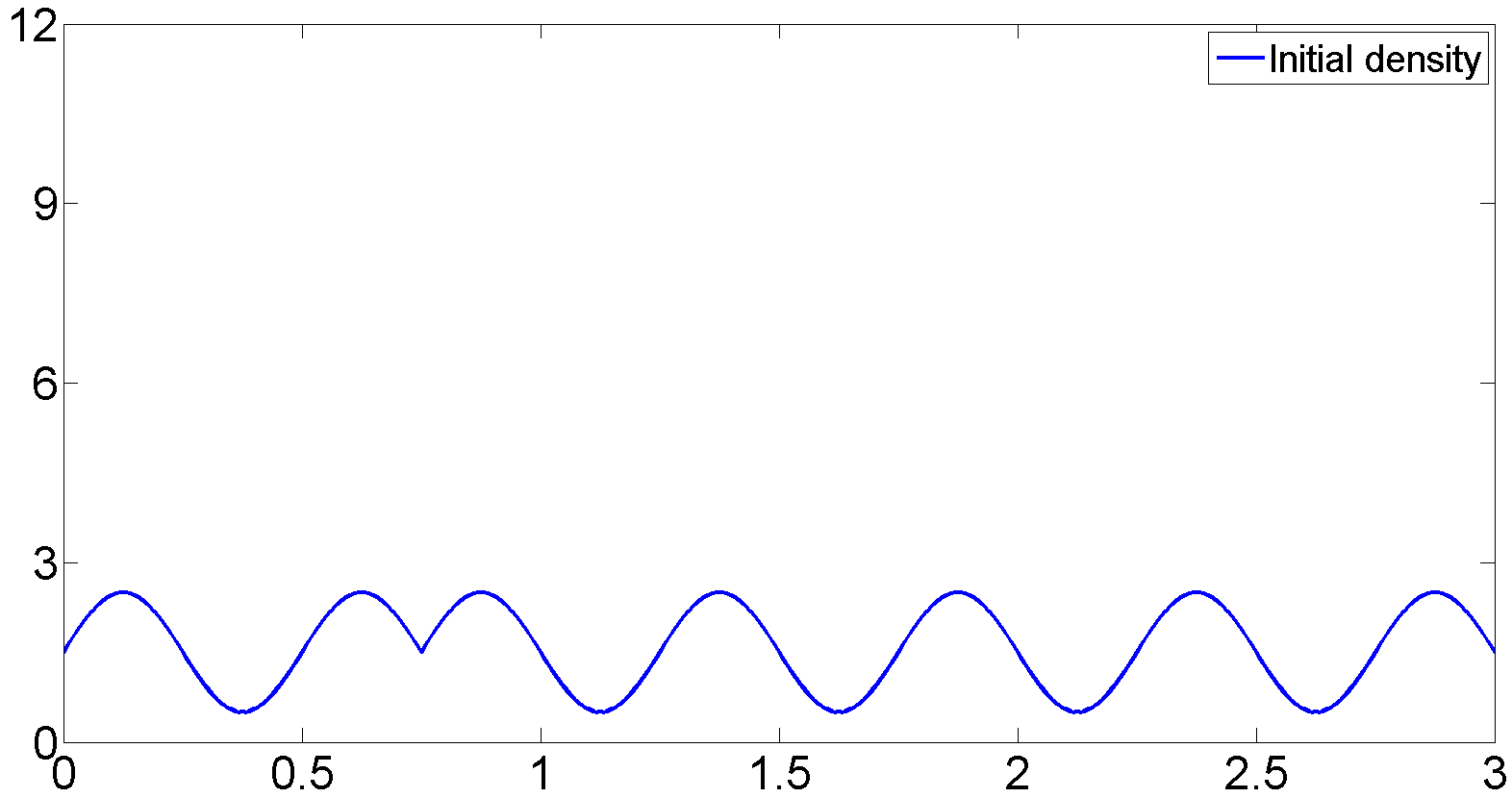}&\includegraphics[scale=0.1]{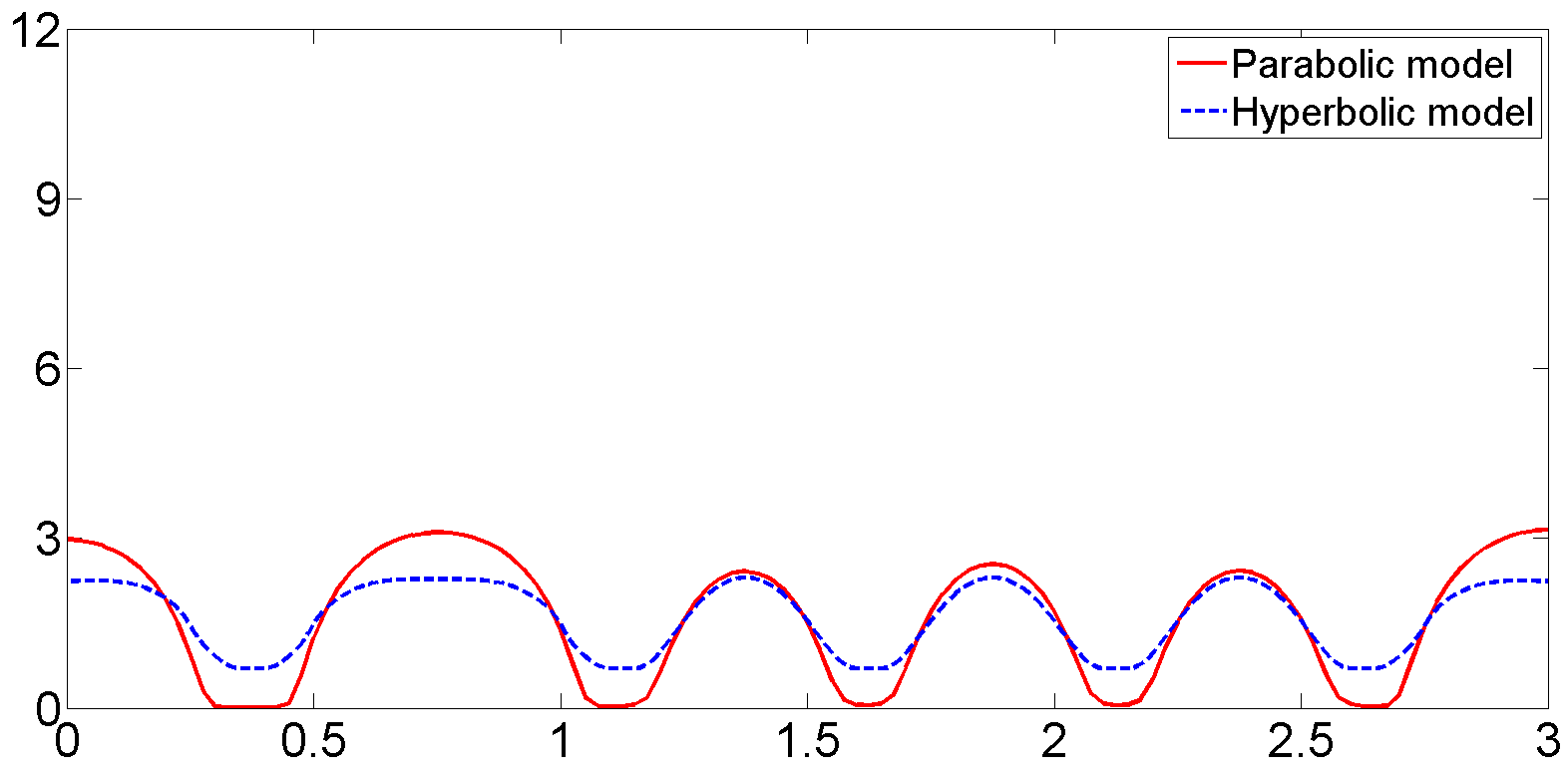}\\
t = 0&t = 0.1\\
\includegraphics[scale=0.1]{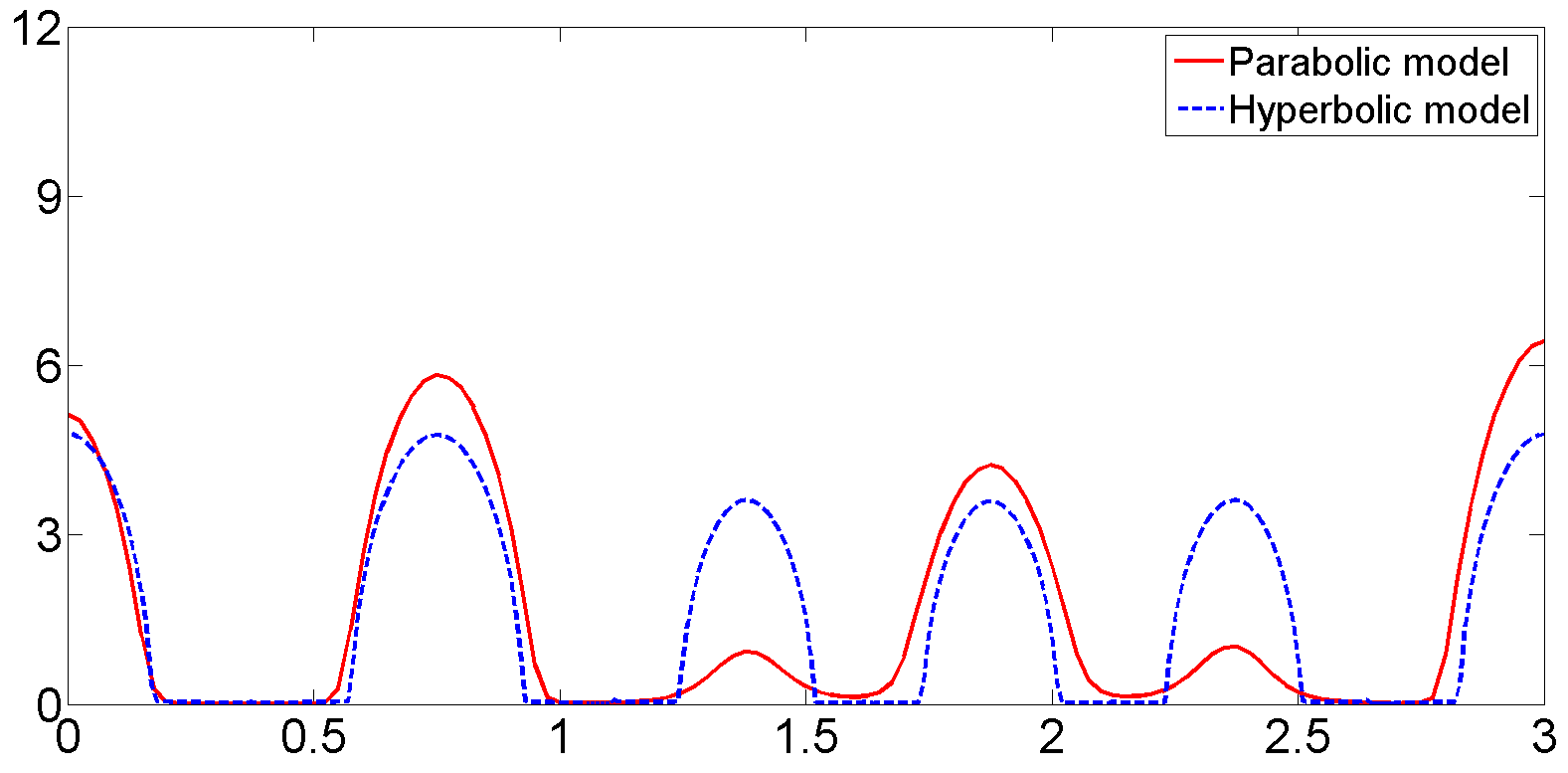}&\includegraphics[scale=0.1]{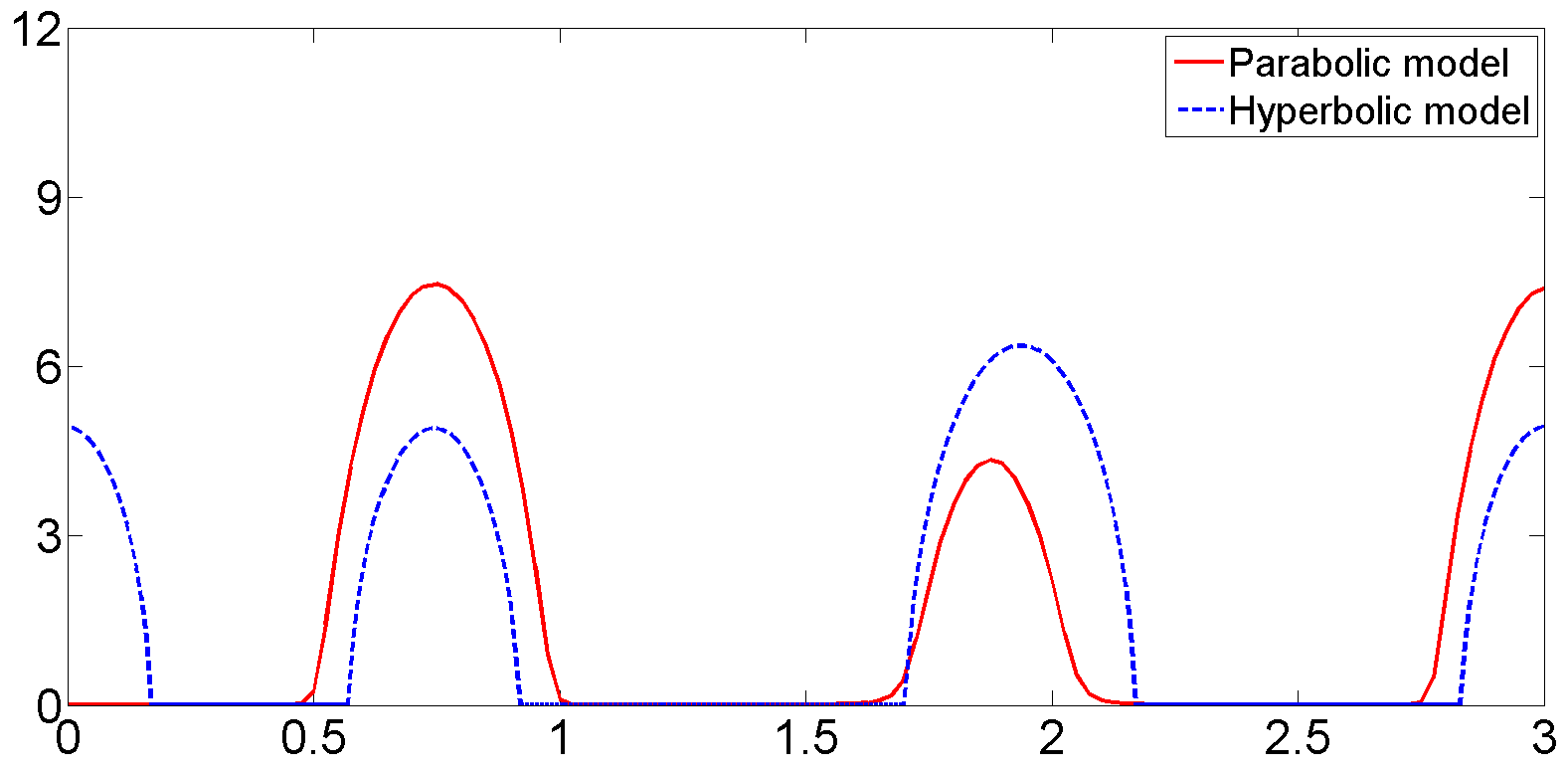}\\
t = 0.5&t = 3\\
\includegraphics[scale=0.1]{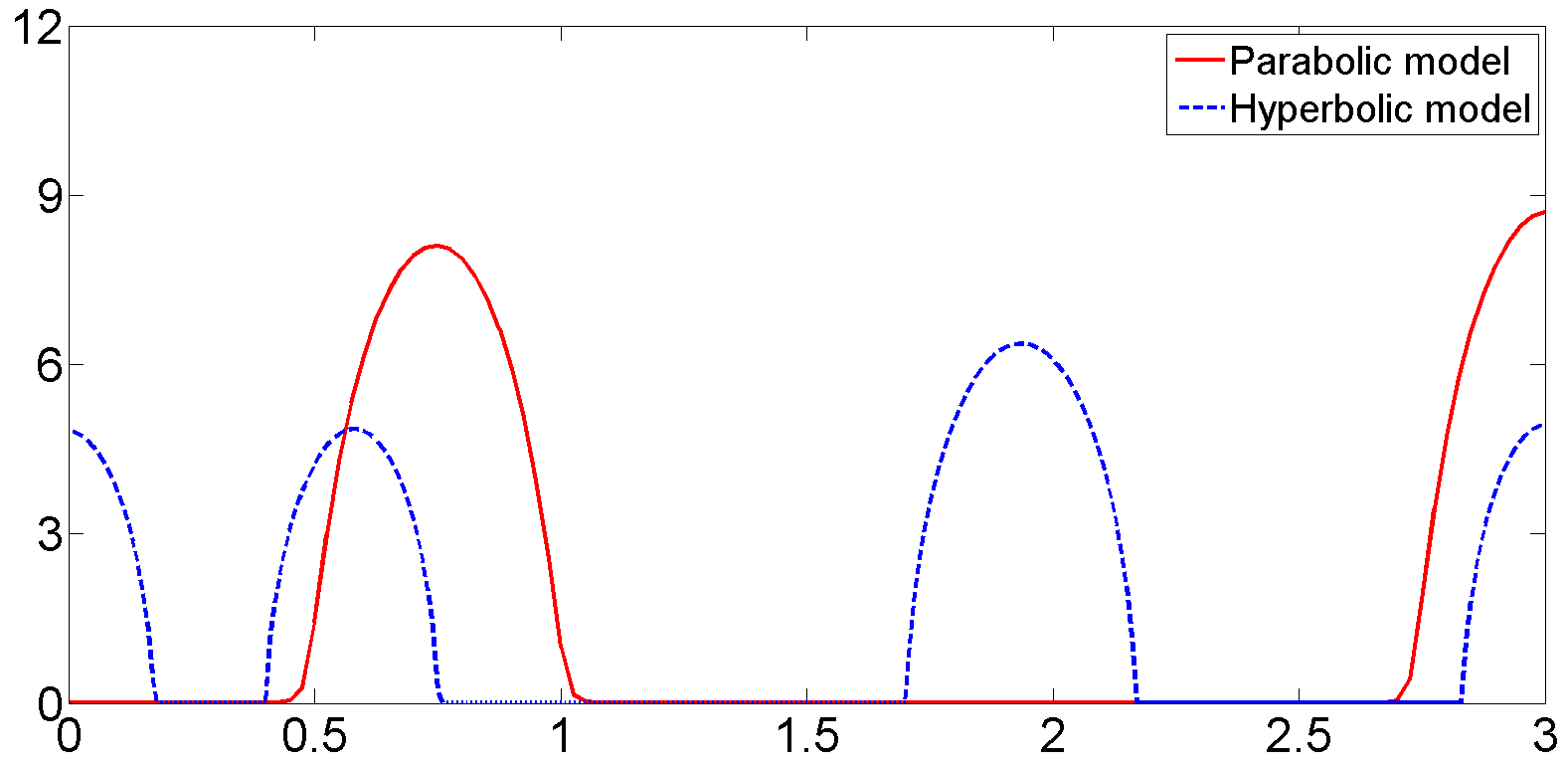}&\includegraphics[scale=0.1]{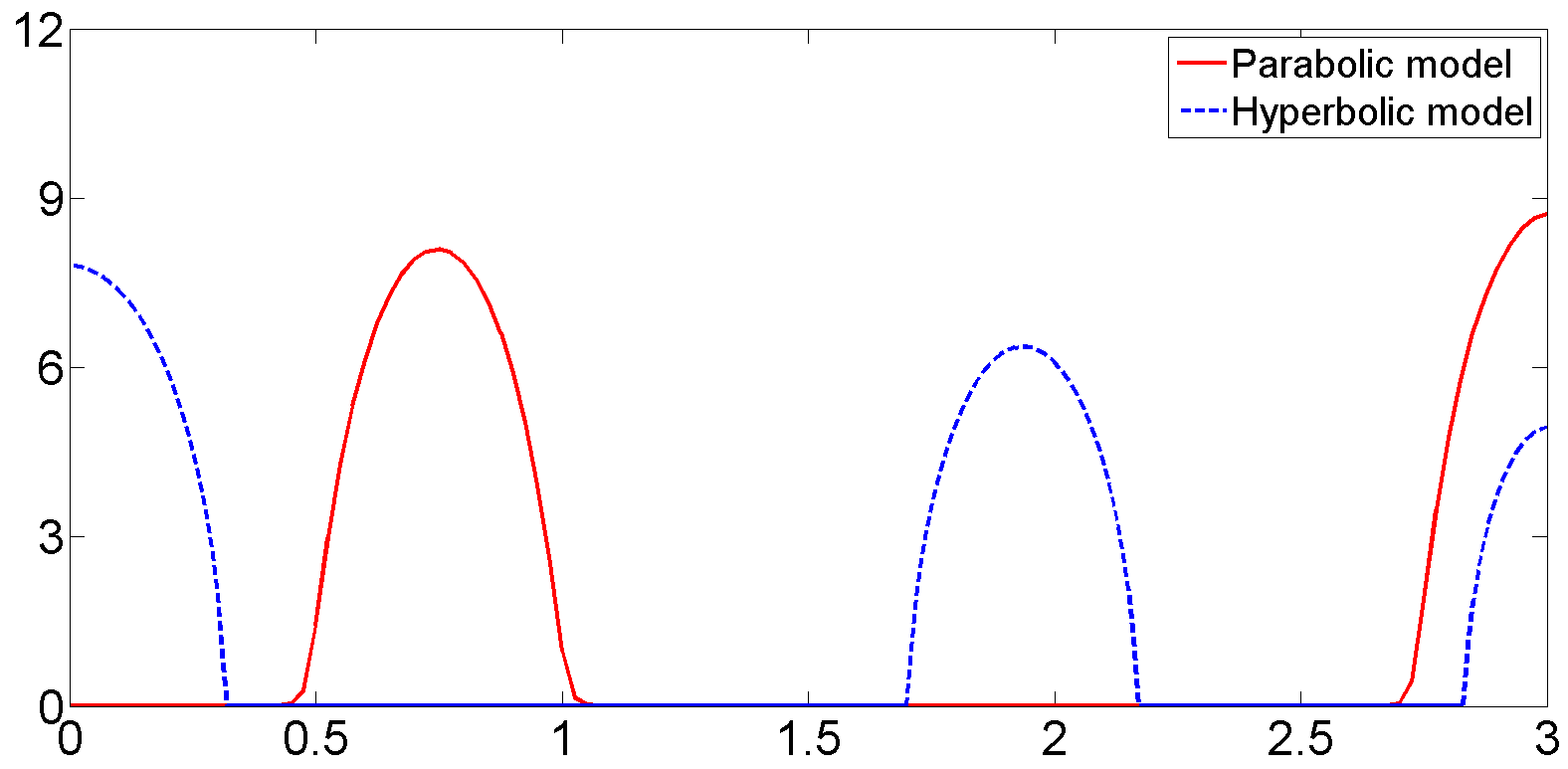}\\
t = 10&t = 20\\
\includegraphics[scale=0.1]{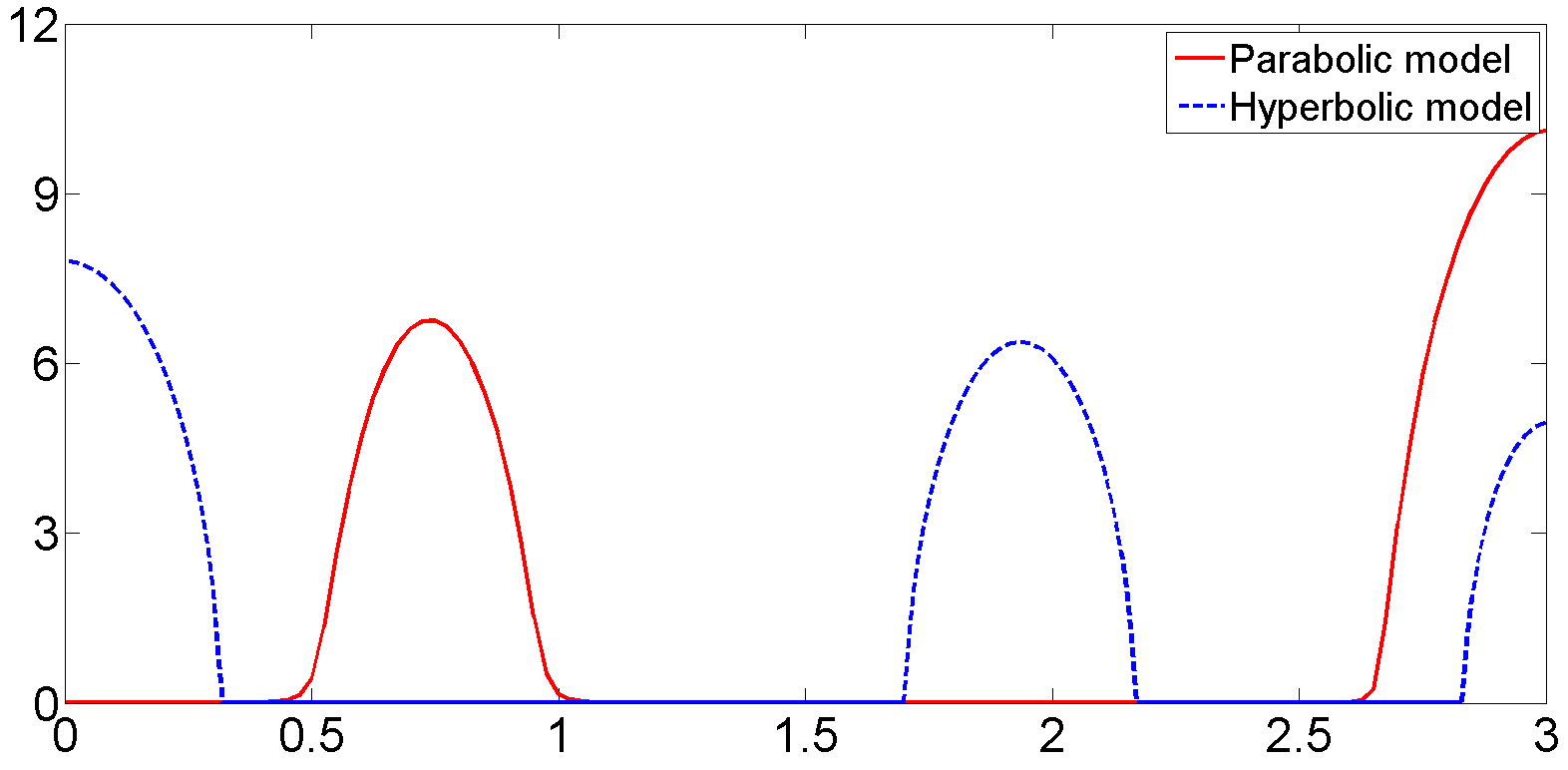}&\includegraphics[scale=0.1]{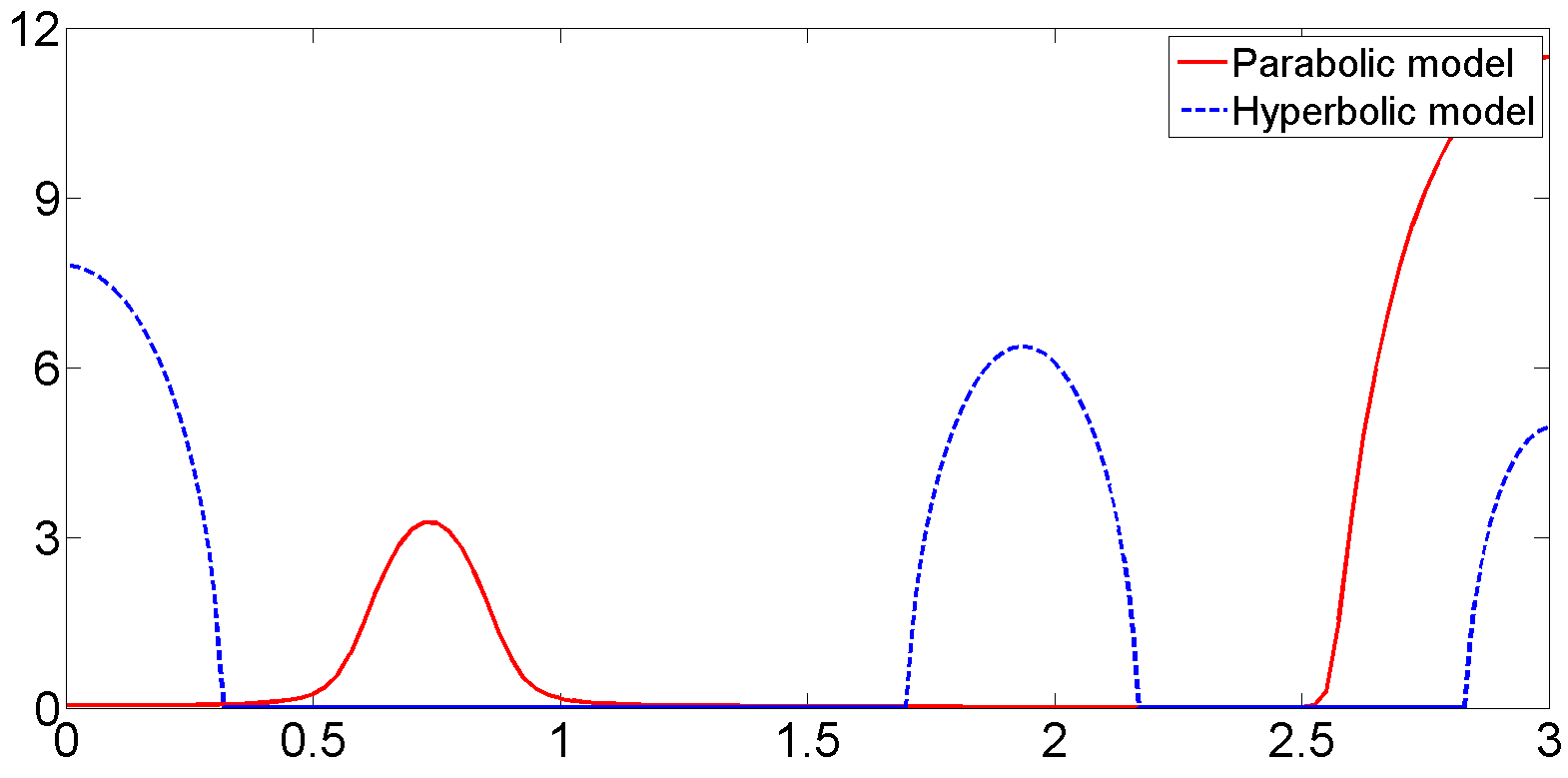}\\
t = 300&t = 331\\
\includegraphics[scale=0.1]{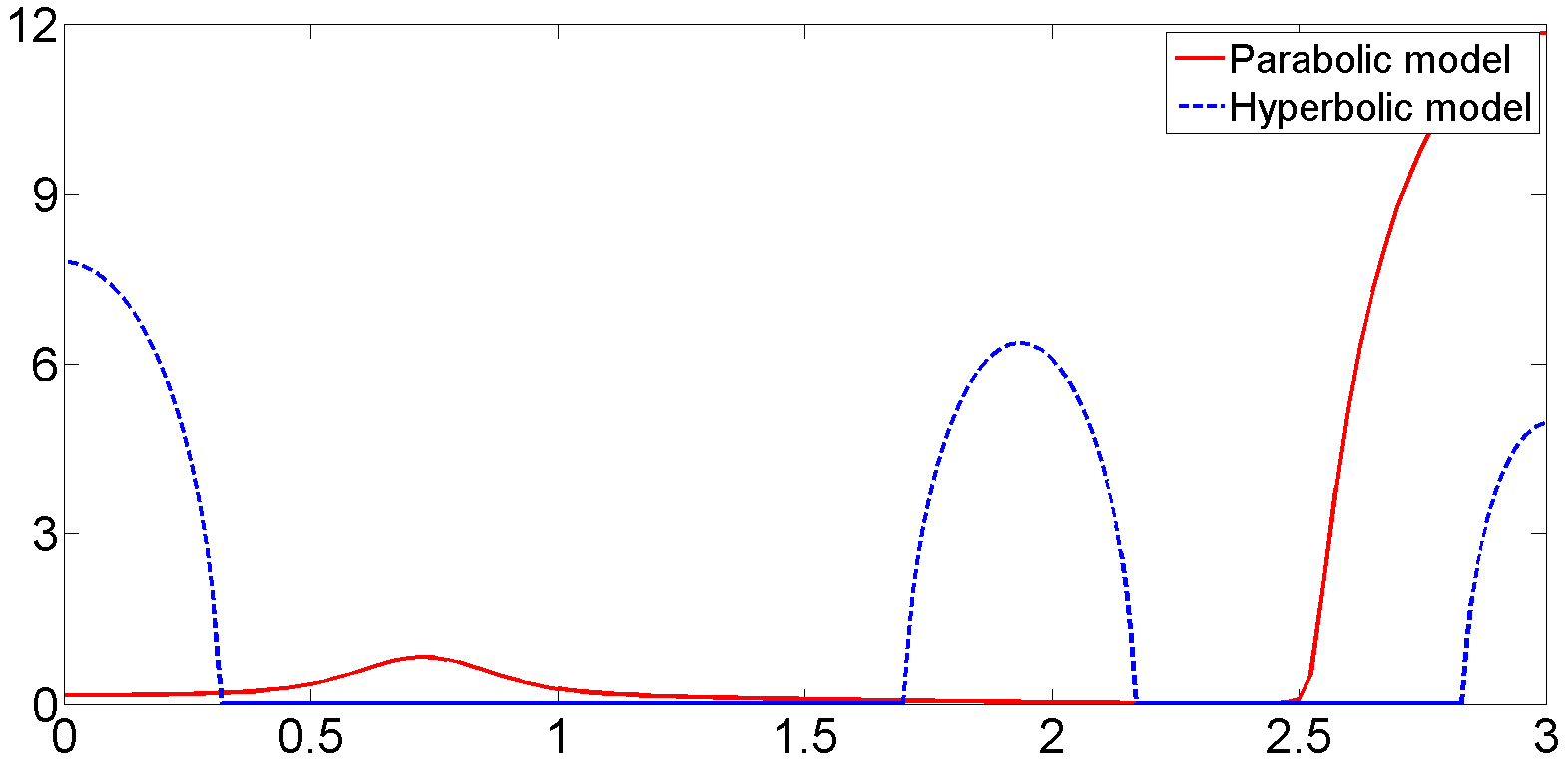}&\includegraphics[scale=0.1]{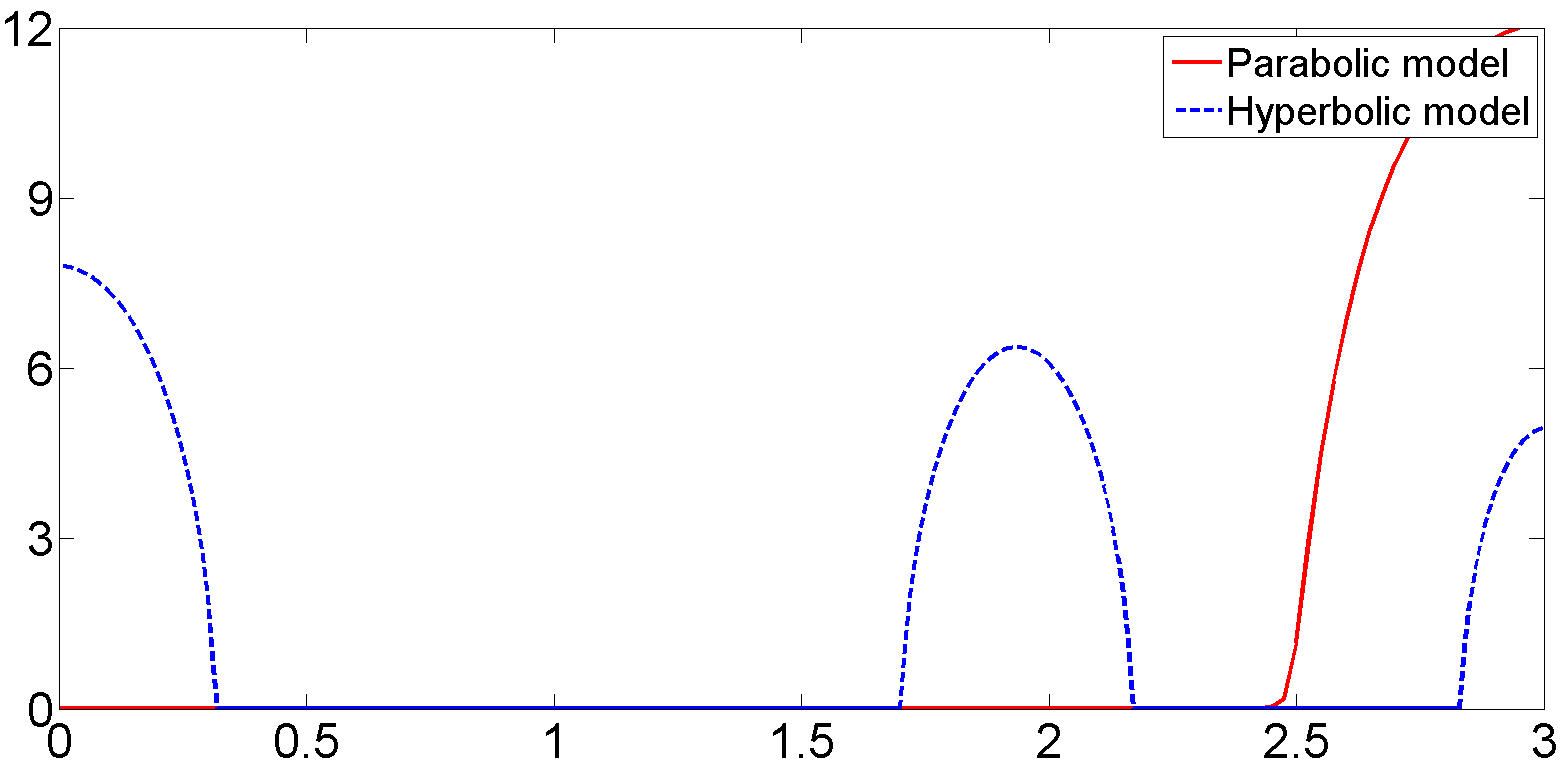}\\
t = 332&t = 333\\
\end{tabular}
\caption{Density $\rho$ as  a function of space at different times for the quasilinear hyperbolic model of chemotaxis \eqref{eq:main_system} and for  the Keller-Segel system \eqref{eq:KellerSegel} with $\chi=10$, $D=0.1$, $a=20$, $b=10$, $\kappa=\alpha=1$ and $\gamma=3$. The initial data are $\rho_{0}(x)=1.5+\sin(4\pi|x-0.25L|)$ and  $\phi_{0}(x)=0$.}
\label{fig:test2_gamma3}
\end{center}
\end{figure}

\begin{figure}[htbp!]
\begin{center}
\begin{tabular}{c}
\includegraphics[scale=0.17]{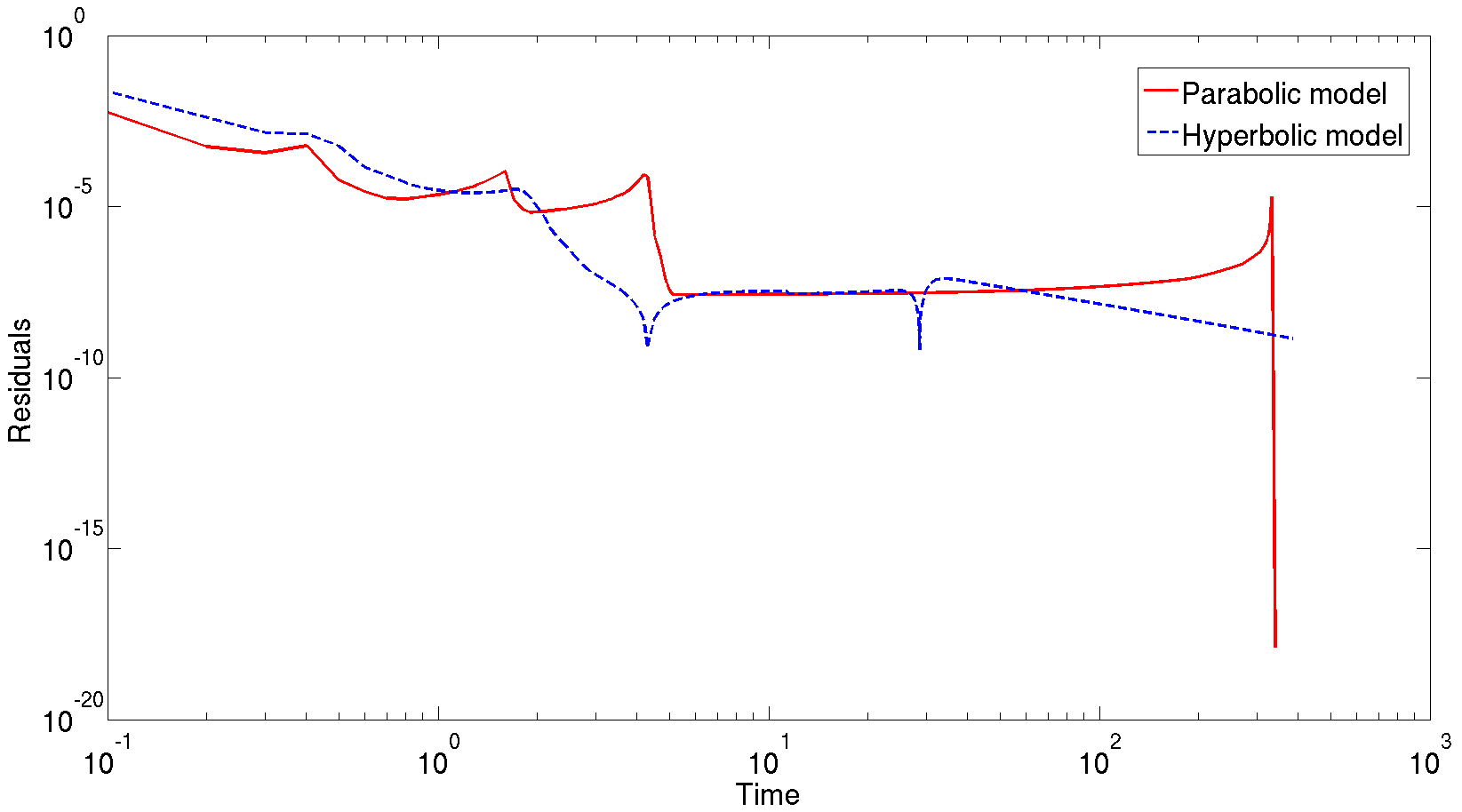}
\end{tabular}
\caption{Residuals of the density as a function of time in a log-log scale for the quasilinear hyperbolic model of chemotaxis \eqref{eq:main_system} and the Keller-Segel system \eqref{eq:KellerSegel}  with $\chi=10$, $D=0.1$, $a=20$, $b=10$, $\kappa=\alpha=1$ and $\gamma=3$. The initial data are $\rho_{0}(x)=1.5+\sin(4\pi|x-0.25L|)$ and  $\phi_{0}(x)=0$.}
\label{fig:test2_gamma3_residuals}
\end{center}
\end{figure}

{\bf Acknowledgement.} The authors thank Fran\c{c}ois Bouchut for some useful suggestions. This work has been partially supported by
the project PORAbruzzo and   by the ANR project MONUMENTALG, ANR-10-JCJC 0103. 

\section{Annex : numerical fluxes and definition \ref{thedef} }\label{annex}
Let us first  recall the  definition \ref{thedef}. A strongly consistent numerical flux $\ds \mathcal{F}$ satisfies the two following conditions~:
\begin{equation}\label{symmetric}
\left\{\begin{array}{l}
\textrm{if } \mathcal{F}^{\rho u}(r,0,R,0)=P(r), \textrm{ then } r=R;
\medskip
\\
 \textrm{if } \mathcal{F}^{\rho u}(r,0,R,0)=P(R), \textrm{ then } r=R.\\
\end{array}\right.
\end{equation}

We consider the following equations~:
\begin{equation}\label{theeq}
\mathcal{F}^{\rho u}(r_{i+1/2}^{n,-},0,r_{i+1/2}^{n,+},0)-\mathcal{F}^{\rho u}(r_{i-1/2}^{n,-},0,r_{i-1/2}^{n,+},0)= P\left(r_{i+1/2}^{n,-}\right)-P\left(r_{i-1/2}^{n,+}\right),
\end{equation}
computed at the beginning of the proof of  Theorem \ref{thethm}. In the following, we will prove that  conditions \eqref{symmetric}
are necessary and sufficient conditions to ensure  that the equalities  $r_{i-1/2}^{n,-}=r_{i-1/2}^{n,+}$ for all $i$ are  the unique solutions of equations \eqref{theeq}. We will also show that  the following classical fluxes~: HLL, HLL-Roe and Suliciu relaxation flux adapted  to vacuum are indeed strongly consistent fluxes.

Remark that  conditions  \eqref{symmetric} have already been derived as  necessary conditions on the flux  in \cite{BOP}, where a sufficient condition on the flux is also given to ensure the uniqueness  property for the solutions of equations \eqref{theeq}.  Since  we are dealing here with the bounded domain case with boundary conditions \eqref{boundary_conditions}, our computations are slightly different from the ones of \cite{BOP} and  we  are able to prove that the conditions \eqref{symmetric}   are also sufficient conditions. 

Indeed, considering equations \eqref{theeq} for all $i$, using that $r_{1/2}^{n,-}=r_{1/2}^{n,+}$ thanks to boundary conditions and using  that the flux $\mathcal{F}$ is consistent, a straightforward  induction implies that 
\begin{equation}\label{theeq2}
\mathcal{F}^{\rho u}(r_{i+1/2}^{n,-},0,r_{i+1/2}^{n,+},0)= P\left(r_{i+1/2}^{n,-}\right), \textrm{ for all } i.
\end{equation}
Using condition \eqref{symmetric}, we obtain that  $r_{i-1/2}^{n,-}=r_{i-1/2}^{n,+}$ for all $i$. Therefore, condition \eqref{symmetric} is a necessary and sufficient condition to guarantee that  the equalities  $r_{i-1/2}^{n,-}=r_{i-1/2}^{n,+}$ for all $i$ are  unique solutions of equations \eqref{theeq}.

Now, let us show that HLL, HLL-Roe and Suliciu with vacuum fluxes satisfy conditions  \eqref{symmetric}. We assume in the following that the functions $P$ and  $P'$ are increasing, as satisfied by the pressure  \eqref{eq:pressure_law} we consider here.

\textbf{HLL flux. } The definition of HLL flux is given at eq. (2.111)  in Bouchut's book \cite{Bouchut_book} and we can compute
\begin{equation*}
\mathcal{F}^{\rho u}(r,0,R,0)=\frac{c_{2}P(r)-c_{1}P(R)}{c_{2}-c_{1}},
\end{equation*}
with $\ds c_{1}=\min(-\sqrt{P'(r)},-\sqrt{P'(R)} )$ and $\ds c_{2}=\max(\sqrt{P'(r)},\sqrt{P'(R)} )$, that is to say
\begin{equation*}
\mathcal{F}^{\rho u}(r,0,R,0)=\frac{P(r)+P(R)}{2},
\end{equation*}
which satisfies clearly conditions \eqref{symmetric}.

\textbf{HLL-Roe   flux.} In \cite{Einfeldt}, we can find a version of the HLL flux adapted to vacuum. In that case,  
\begin{equation*}
\mathcal{F}^{\rho u}(r,0,R,0)=\frac{c_{2}P(r)-c_{1}P(R)}{c_{2}-c_{1}},
\end{equation*}
with  $\ds c_{1}=\min(-\sqrt{P'(r)},-\bar c )$ and $\ds c_{2}=\max(\bar c,\sqrt{P'(R)} )$, where $\ds \bar c=\sqrt{\frac{\sqrt R P'(R)+\sqrt rP'(r)}{\sqrt R+\sqrt r}}$, that is to say
\begin{equation*}
\mathcal{F}^{\rho u}(r,0,R,0)=\left \{ 
\begin{array}{ll}
\ds\frac{\sqrt{P'(R)} P(r)+\bar cP(R)}{\sqrt{P'(R)} +\bar c} ,& \textrm{ if } R>r, \\
\ds\frac{\bar cP(r)+\sqrt{P'(r)}P(R)}{\bar c+\sqrt{P'(r)}}, & \textrm{ if } r>R. 
\end{array}\right.
\end{equation*}
From this expression, we conclude easily that HLL-Roe flux satisfies conditions \eqref{symmetric}.

\textbf{Suliciu flux adapted to vacuum.} Now, we consider the Suliciu relaxation flux adapted to vacuum, which expression can be found in \cite{Bouchut_book} at equations (2.133)-(2.136).

 If $0<r<R$, a standard computation leads to 
\begin{equation*}
\mathcal{F}^{\rho u}(r,0,R,0)=\frac{c_{2}P(r)+c_{1}P(R)}{c_{1}+c_{2}}+\frac{Rc_{2}}{c_{1}+c_{2}}\times \frac{(P(r)-P(R))^2}{c_{2}(c_{1}+c_{2})+R(P(R)-P(r))},
\end{equation*}
with $\ds c_{1}=r\sqrt{P'(r)}+\alpha r \left(\frac{P(R)-P(r)}{R\sqrt{P'(R)}}\right)>0$ and $\ds c_{2}=R\sqrt{P'(R)}>0$.

If $r>R>0$, we obtain a similar formula, namely
\begin{equation*}
\mathcal{F}^{\rho u}(r,0,R,0)=\frac{c_{2}P(r)+c_{1}P(R)}{c_{1}+c_{2}}+\frac{rc_{1}}{c_{1}+c_{2}}\times \frac{(P(r)-P(R))^2}{c_{1}(c_{1}+c_{2})+r(P(r)-P(R))},
\end{equation*}
with $\ds c_{1}=r\sqrt{P'(r)}>0$ and $\ds c_{2}=R\sqrt{P'(R)}+\alpha R \left(\frac{P(r)-P(R)}{r\sqrt{P'(r)}}\right)>0$.

Now, we consider the equation $\ds \mathcal{F}^{\rho u}(r,0,R,0)=P(r)$. On the one hand, in the case $r<R$,
\begin{equation*}
\begin{split}
0=&\mathcal{F}^{\rho u}(r,0,R,0)-P(r)\\
=&\frac{c_{1}(P(R)-P(r))}{c_{1}+c_{2}}+\frac{Rc_{2}}{c_{1}+c_{2}}\times \frac{(P(r)-P(R))^2}{c_{2}(c_{1}+c_{2})+R(P(R)-P(r))}.
\end{split}
\end{equation*}
Since the right-hand side of the last equation is the sum of two positive terms, it is straightforward that they are both null and that $P(R)=P(r)$, which leads to $R=r$.
On the other hand, in the case $r>R$,
\begin{equation*}
\begin{split}
0=&\mathcal{F}^{\rho u}(r,0,R,0)-P(r)\\
=&\frac{c_{1}(P(R)-P(r))}{c_{1}+c_{2}}+\frac{rc_{1}}{c_{1}+c_{2}}\times \frac{(P(r)-P(R))^2}{c_{1}(c_{1}+c_{2})+r(P(r)-P(R))}.
\end{split}
\end{equation*}
This equation can be simplified as~:
\begin{equation*}
c_{1}(c_{1}+c_{2})=0 \textrm{ or }P(r)-P(R)=0.
\end{equation*}
Since the first equality  is impossible, we conclude that $r=R$.

Notice that the equation $\ds \mathcal{F}^{\rho u}(r,0,R,0)=P(R)$ can be treated in a similar way. Therefore, we have proved that the Suliciu relaxation flux satisfies also the conditions \eqref{symmetric}.

\bibliographystyle{plain}

\end{document}